\documentclass[a4paper,11pt]{amsart}

\usepackage{amssymb,amsfonts,amsthm,amscd,stmaryrd}

\usepackage{ae}
\usepackage[utf8]{inputenc}
\numberwithin{figure}{section}

\usepackage{mathrsfs,a4wide}
\usepackage{esint}

\usepackage{import}

\usepackage[leqno]{amsmath}

\usepackage[final,allcolors=blue,colorlinks=true]{hyperref}

\numberwithin{figure}{section}

\newtheorem{theorem}{Theorem}[section]
\newtheorem{lemma}[theorem]{Lemma}
\newtheorem{proposition}[theorem]{Proposition}
\newtheorem{corollary}[theorem]{Corollary}
\theoremstyle{definition}
\newtheorem{definition}[theorem]{Definition}

\numberwithin{equation}{section}

\newcommand{\R}{\mathbb{R}}

\newcommand{\Ha}{\mathcal{H}}

\newcommand{\eps}{\varepsilon}
\newcommand{\vphi}{\varphi} 
\newcommand{\la}{\langle}
\newcommand{\ra}{\rangle}
\newcommand{\diver}{\operatorname{div}}

\begin{document}

\title[Generalized Harnack inequality for semilinear  equations]{Generalized Harnack inequality for semilinear elliptic equations}

\author{Vesa Julin}
\address{University of Jyv\"{a}skyl\"{a},
Department of Mathematics and Statistics,
P.O.Box 35  FI-40014, Finland}
\email{vesa.julin@jyu.fi}

\keywords{Harnack inequality, elliptic equations in divergence form, semilinear  equations, nonhomogeneous equations.}
\subjclass[2010]{35B65, 35G20, 35D30}

\begin{abstract} This paper is concerned with  semilinear  equations  in divergence form 
\[
\diver(A(x)Du) = f(u)
\]
where  $f :\R \to [0,\infty)$ is nondecreasing. We introduce a sharp Harnack type inequality for nonnegative solutions  which is a  quantified version of the condition  for strong maximum principle found by Vazquez and Pucci-Serrin in   \cite{Vaz, PS2000}  and   is closely related to the classical Keller-Osserman condition \cite{Keller, Oss} for the existence of entire solutions.

\end{abstract}

\maketitle

\section{Introduction}

In this paper we study nonnegative solutions of the equation
\begin{equation} \label{the pde}
\diver(A(x)Du) = f(u).
\end{equation}
The coefficient  matrix $A(x)$ is assumed to be symmetric, measurable  and to satisfy the uniform ellipticity condition 
\[
\lambda |\xi|^2 \leq \la A(x) \xi, \, \xi \ra  \leq  \Lambda |\xi|^2
\]
for every $\xi \in \R^n$, where $0< \lambda \leq \Lambda$. The function $f : \R \to [0,\infty)$ is assumed to be nonnegative and nondecreasing. Note that we allow $f$ to have jump discontinuity.  Throughout the paper we denote the integral function of $f$ by $F$, i.e., 
\[
F(t) = \int_0^t f(s)\, ds. 
\]

The goal of this paper is to prove a general Harnack inequality for \eqref{the pde}. To be more precise we seek to answer the following simple question in a quantitative way. If $u$ is a nonnegative solution of \eqref{the pde} in $B_2$, then does 
the value $\inf_{B_1}u$ control $\sup_{B_1}u$?  I would like to  stress that this paper does not concern regularity of solutions of \eqref{the pde}. In fact,   the regularity for \eqref{the pde} is well understood.  Indeed, since $f$ is nonnegative,  any solution of \eqref{the pde} is a weak  subsolution of the corresponding linear equation.
 Therefore by  De Giorgi   theorem \cite{degiorgi}  nonnegative solutions are  locally bounded and  by the De Giorgi-Nash-Moser theorem they are H\"older continuous. 
However, the point is that  both the  $L^\infty$-bound and the H\"older norm depend on the $L^2$-norm of the solution  
and it is not clear how one can  bound the $L^2$-norm  by knowing the   value of the solution only   at one point. 

The main result reads as follows.
\begin{theorem} \label{thm1}
Let $u \in W^{1,2}(B_2)$ be  a nonnegative solution of \eqref{the pde}. Denote $M= \sup_{B_1}u$ and $m= \inf_{B_1}u$. There is a constant $C$ which depends only on the ellipticity constants $\lambda, \Lambda$ and the dimension $n$ such that 
\begin{equation} \label{harnack oma}
\int_m^M \frac{dt}{\sqrt{F(t)} +t } \leq C.
\end{equation}
In particular, $C$ is independent of the solution $u$ and of the function $f$.
\end{theorem}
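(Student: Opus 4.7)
The plan is to prove the inequality by iterating a one-step dichotomy for the super-level sets of $u$: at each scale, either the supremum enjoys a multiplicative decay (the linear Harnack mechanism), or the nonlinearity forces a definite contribution to the integral in \eqref{harnack oma} (the Keller--Osserman mechanism). I would first reduce to the case of smooth $f$ and smooth $u$ by approximating $f$ by smooth nondecreasing functions and observing that both sides of \eqref{harnack oma} are continuous under monotone convergence. The starting point is the Caccioppoli identity on super-level sets: testing \eqref{the pde} against $(u-t)_+ \phi^2$ for a Lipschitz cutoff $\phi$ between concentric balls $B_r \subset B_{r'} \subset B_2$ gives, using $f \geq 0$,
\[
\lambda \int |D(u-t)_+|^2 \phi^2 + \int f(u)(u-t)_+ \phi^2 \leq \frac{C}{(r'-r)^2} \int_{\{u>t\}} (u-t)^2 .
\]
The second integrand on the left, which is absent in the homogeneous setting, is the nonlinear gain: on $\{u \geq 2t\}$ it is bounded below by $t f(t)\,|\{u \geq 2t\}|$, and $t f(t) \geq F(t)$ because $f$ is nondecreasing.

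Combining this estimate with Sobolev's inequality in the De Giorgi fashion, together with the Moser sup bound (valid because $u$ is a subsolution of the homogeneous equation $\diver(ADu)=0$ whenever $f \geq 0$), I expect the following one-step dichotomy. There exist constants $\sigma \in (0,1)$ and $c > 0$, depending only on $n, \lambda, \Lambda$, such that for every $r \in [1, 3/2]$ and $\delta \in (0, 1/2]$, writing $T := \sup_{B_{r+\delta}} u$,
\[
\text{either}\quad \sup_{B_r} u \leq \sigma T + C m, \qquad \text{or} \qquad \int_{\sup_{B_r} u}^{T} \frac{dt}{\sqrt{F(t)} + t} \geq c\,\delta.
\]
Iterating from $r_0 = 1$ with radii $r_{k+1} = r_k + \delta_k$ subject to $\sum_k \delta_k \leq 1$ (so that all $r_k \leq 2$), consecutive ``linear'' steps telescope into a geometric factor, while each ``nonlinear'' step contributes at least $c\,\delta_k$ to $\int_m^M dt/(\sqrt{F(t)}+t)$. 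Summing the nonlinear contributions and absorbing the bounded geometric tail yields \eqref{harnack oma}.

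The main difficulty is establishing the dichotomy with the precise denominator $\sqrt{F(t)} + t$. The $+t$ captures the linear subsolution scale, while $\sqrt{F(t)}$ captures the nonlinear Caccioppoli gain; the two regimes $\sqrt{F(t)} \ll t$ and $\sqrt{F(t)} \gg t$ have to be treated uniformly, and it is this unified treatment that makes \eqref{harnack oma} sharp in both the Harnack and Keller--Osserman limits. A secondary obstacle is that the iteration does not terminate in finitely many steps, so one must ensure that the dichotomy is quantitatively scale-invariant in a sense that permits clean summation of the geometric runs against the nonlinear contributions.
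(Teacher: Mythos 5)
Your reduction to smooth data and your Caccioppoli computation with the nonlinear gain term $\int f(u)(u-t)_+\phi^2$ are both correct, and that term does underlie the ``Keller--Osserman'' half of the paper's argument (it is the engine of Lemma \ref{de giorgi subille}). But the ``linear'' branch of your proposed dichotomy,
\[
\sup_{B_r} u \leq \sigma T + C m,
\]
has no justification and in fact cannot be obtained from the linear theory. You invoke the Moser sup bound, which is available because $f \geq 0$ makes $u$ a subsolution of $\diver(A Du)=0$; that gives $\sup_{B_r} u \leq C(\delta)\,\|u\|_{L^2(B_{r+\delta})}$ and nothing more. The factor $C m$ on the right would require a weak Harnack inequality tying the $L^2$ norm (or $T$) down to $\inf_{B_1}u$, and the linear weak Harnack applies to \emph{supersolutions} of the homogeneous equation. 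Since $f\geq 0$, $u$ is \emph{not} such a supersolution; the entire difficulty of the theorem is that the link between $\sup$ and $\inf$ must go through the nonlinear equation, and the strength of that link degrades precisely when $f$ is large --- which is the regime your nonlinear branch is supposed to cover. As written, the dichotomy presupposes a version of the conclusion. A related soft spot: even granting the dichotomy, you iterate outward with $\sum\delta_k\leq 1$ and say the geometric runs ``telescope,'' but with $\sigma<1$ fixed and infinitely many linear steps permitted, the supremum may simply grow by a bounded geometric factor up to $B_2$ with no contradiction and no contribution to the integral; the mechanism that terminates the iteration is never identified.

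The paper's proof avoids this trap by never relying on a linear weak Harnack inequality. It instead derives a two-regime lower bound on the derivative of the level-set measure (Proposition \ref{decay oma}), $-\mu'(t) \gtrsim \min\{\mu(t)^{(n-1)/n}/\sqrt{F(2t)},\ \mu(t)/t\}$, and the symmetric bound for $\eta(t)=|\{u<t\}\cap B_2|$. These are obtained by integrating the equation over level sets in balls of radius $\lesssim t/\sqrt{F(2t)}$ (Lemma \ref{caccio invert}) and combining with a measure-theoretic relative isoperimetric lemma for a smoothed density function (Lemma \ref{isop ineq 1}). The infimum $m$ then enters through $\eta$ and a De Giorgi step (Lemma \ref{DT1}), while the supremum $M$ is controlled via $\mu$ together with a contradiction argument that doubles the maximum across nested annuli and plays the linear sup bound (Lemma \ref{perus sub}) against the nonlinear subsolution estimate (Lemma \ref{de giorgi subille}). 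None of this machinery appears in your sketch, and without some substitute for the level-set decay estimate the argument does not close. If you want to salvage your outline, the place to start is to replace the unsupported ``$+Cm$'' with an honest estimate of how the sublevel sets $\{u<t\}$ shrink as $t$ decreases --- that is essentially Proposition \ref{decay oma} applied to $\eta$.
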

Theorem \ref{thm1} is in the same spirit as  \cite{Vesku} for nondivergence form equations with nonhomogeneous gradient drift term. The above result is stronger and more complete  than the main result in \cite{Vesku}, because  we do not need any regularity nor growth assumptions on $f$, and most importantly, the  constant in Theorem \ref{thm1} is independent of $f$. In particular, the estimate is stable under scaling.

Harnack inequality for linear elliptic equations by Moser \cite{Moser} is one of the most important results in the theory of elliptic partial differential equations. There are numerous generalizations of this theorem from which the most relevant for us  is by Di Benedetto-Trudinger \cite{DT} who proved  the Harnack inequality   for quasiminimizers of integral functionals. Harnack inequality for general  quasilinear equations  has been considered e.g. by Serrin \cite{serrin} (see also  \cite{LU} for the H\"older continuity). This result has the disadvantage that the constant will depend on the solution itself. In  Theorem \ref{thm1}  the inequality   \eqref{harnack oma} is not in the classical form but its structure depends on the scaling of the equation. This has the advantage that the  constant is then independent of the solution. 
Theorem \ref{thm1} is  more similar to  Pucci-Serrin \cite{PS2001} who introduced a Harnack inequality  in $\R^2$ for  quasilinear equations similar to  \eqref{the pde} where the operator is allowed to be  nonlinear but not to have  dependence on $x$.    Compared to \cite{PS2001} the advantage of  Theorem \ref{thm1}  is that the dependence on every  parameter in \eqref{harnack oma} is explicit and we do not need any further  assumptions on   $f$ other than the  monotonicity. This makes the result more general and the esimate \eqref{harnack oma} more stable. Moreover, the result holds in any dimension and the operator is allowed to have merely bounded coefficients. In particular, it is not possible to prove Theorem \ref{thm1} by a comparison argument.    
 
The drawback of  \eqref{harnack oma} is that it is in  implicit form and   it may be difficult to write excplicitly the  relation between the maximum  and the minimum. On the other hand     \eqref{harnack oma}  is a natural generalization of the classical Harnack inequality  for equations of type \eqref{the pde}. We illustrate this  by giving a complete answer to  the following problems.  
In the following $u \in W^{1,2}(B_2)$ is a nonnegative solution of \eqref{the pde} with $M=  \sup_{B_1}u $ and $m = \inf_{B_1}u$.
\begin{itemize} 
\item[(i)]  \textbf{Strong minimum principle}. If $u$ is zero at one point in $B_1$, is it zero everywhere? 
\item[(ii)] \textbf{Boundedness}. Is $u$  bounded  in $B_1$  by a constant which depends only on $u(0)$? 
\item[(iii)] \textbf{Local Boundedness}.  Is there a radius $r>0$ such that  $u$ is bounded  in $B_r$  by a constant which depends only on $u(0)$?
\end{itemize} 

The problem  (i) has been considered  by Vazquez \cite{Vaz} and by Pucci-Serrin and their collaborators \cite{PS1999, PS2000, PS2004, PS2007} and we know that the strong  minimum principle holds if $f$ is positive and
\begin{equation}\label{min princ}
\int_0^1 \frac{dt}{\sqrt{F(t)}} = \infty.
\end{equation}
In fact, this condition is also necessary \cite{BBC, diaz}. Theorem \ref{thm1} is in accordance with this and it provides a quantification of the strong minimum principle. By this we mean that if 
\[
\int_0^1 \frac{dt}{\sqrt{F(t)}+t} = \infty,
\]
then  there is a continuous,   increasing  function $\Phi :[0,1] \to [0,1]$ such that for $M \leq 1$ it holds  $ m \geq \Phi(M)$,  (see also  \cite{PS2001}). In order to quantify the strong minimum principle we need to replace the function $\sqrt{F(t)}$ in  \eqref{min princ}  by $\sqrt{F(t)} +t$. This is natural since  in the linear case $f \equiv 0$ the estimate \eqref{harnack oma} then  reduces to the classical Harnack inequality.

Similarly we find an answer to  (ii).  If  
\[
\int_1^\infty \frac{dt}{\sqrt{F(t)}+t} = \infty,
\]
 then  there is a continuous,   increasing  function $\Psi :[1,\infty) \to [1,\infty)$ such that for $m \geq 1$ it holds  $ M \leq \Psi(m)$,  (see also  \cite{PS2001}).
This condition is very similar to the Keller-Osserman condition \cite{Keller, Oss} which states that if $f$ is positive and  $u$ is a nonnegative and nontrivial  solution of \eqref{the pde} in the whole $\R^n$ then necessarily
\begin{equation} \label{keller oss 2}
\int_1^\infty \frac{dt}{\sqrt{F(t)}} = \infty.
\end{equation}  
For the proof of this  see \cite{Konkov} or Theorem \ref{theorem sub} below.  The above condition is sharp for the uniform boundedness of the solutions. Indeed if $f$ does not satisfy \eqref{keller oss 2} then there exists a sequence of nonnegative solutions $(u_k)$ of \eqref{the pde} such that $u_k(0)\leq 1$ and $\sup_{B_1}u_k \to \infty $ as $k \to \infty$. We leave this to the reader.

Finally to answer (iii) we find that if $u$ and $f$ are  as in Theorem \ref{thm1}, then  we may always  find a radius $r>0$ such that $\sup_{B_r}u$ is uniformly bounded by a constant which depends on the value $u(0)$. Indeed, since the constant in Theorem \ref{thm1} does not depend on $u$ and $f$, we obtain by a simple scaling argument that for $M_r = \sup_{B_r}u$ it holds
\[
\int_{u(0)}^{M_r} \frac{dt}{r \sqrt{F(t)} +t } \leq C.
\]
Note that when $r \to 0$ the above estimate converges to the classical Harnack inequality.  Therefore when $r>0$ is small enough we have that $M_r < \infty$. This result can be seen as a weak counterpart of the short time existence result for the one dimensional   initial value  problem
\[
y''= f(y), \qquad y(0) = y_0 \quad \text{and} \quad y'(0) = y_0'.
\] 
Note that if $f$ satisfies \eqref{keller oss 2}  the above initial value  problem    has a solution in the whole $\R$.

The statement of Theorem \ref{thm1} is sharp which can be seen already in dimension one (see \cite[Remark 1]{Oss}). The assumption $f\geq 0$ is also necessary, i.e., Theorem \ref{thm1} is not true for equations 
\[
-\Delta u = f(u)
\] 
where $f$ is nonnegative and monotone. This can be seen by a simple example which we give at the end of the paper. One reason for this is  that the above equation is the Euler equation of the nonconvex functional
\[
\int_{B_2} \frac{1}{2}|Du|^2\, dx - F(u)\, dx
\]
and criticality alone is not enough to prove the optimal  $L^\infty$- bound for  minimizers (see \cite{cabre} and the references therein). On the other hand  Theorem \ref{thm1}  could still hold without the monotonicity  assumption on $f$.

The proof of Theorem  \ref{thm1} is rather long and has several  stages, and therefore we give its outline  here.  In this paper we develop further the ideas from \cite{Vesku}. The main difficulty is to  overcome the lack of regularity 
and growth condition on $f$, and to avoid the constant $C$ to depend on $f$. In order to do this we will  revisit the proof of the classical Harnack inequality by Di Benedetto-Trudinger \cite{DT} in order  to have a more suitable and sharper version
which allows us to treat  the nonhomogeneous case. 

To overcome the lack of regularity and to have the constant independent of $f$, we   use the fact that the equation is in divergence form and integrate it locally over the level sets of the solution  (as in  \cite{mazya, Talenti}). This will give us precise  information how fast the level sets  locally decay.  Similar method has been used to study global regularity  for solutions of elliptic equations e.g. in \cite{CM}. Here we use it to prove local estimates. 

The first  observation is that we have a good local estimate on the decay rate of the level set $\{u \geq t\}$ when we are  in a ball  $B(x,r)$ whose radius is small $r \simeq t/\sqrt{F(2t)}$ and the density of the level set in the ball
\[
\sigma_t(x)= \frac{|\{u \geq t\} \cap B(x,r)|}{|B(x,r)|}
\]
  is not close to one or zero (Lemma \ref{caccio invert}). The second  observation is a measure theoretical lemma (Lemma \ref{isop ineq 1}) which states that the measure of the set where the density $\sigma_t$ is between $1/5$ and $4/5$  (which can be thought to be the ''boundary'' of the level set $\{ u\geq t\}$) is related to the  measure of the set where the density is larger than $4/5$ (which corresponds to the ''interior'' of the level set   $\{ u\geq t\}$). This  is of course very much related to the isoperimetric inequality. We combine these two lemmas and obtain the following   estimate (see Proposition \eqref{decay oma}) for the decay rate  of the level set $\mu(t) = |\{u \geq t\} \cap B_2|$,  
\begin{equation} \label{super intro}
 -\mu'(t) \geq c \min \Bigl\{  \frac{1}{\sqrt{F(2t)}}  \mu(t)^{\frac{n-1}{n}} , \frac{1}{t}\mu(t)\Bigl\},
\end{equation}
for almost every $t>0$ for which $\mu(t) \leq |B_2|/2$. When $\mu(t) \geq |B_2|/2$ we have a similar estimate for  $\eta(t) =  |\{u < t\} \cap B_2|$.  Note that \eqref{super intro} has two parts on the right hand side. The first one is the nonhomogeneous estimate and the second is the homogeneous one. In the sublinear case $f(t) \leq t$ we may integrate \eqref{super intro} and conclude that  the solution is $L^\eps$-intergable.  In the nonhomogeneous case  the inequality  \eqref{super intro}  may oscillate between the two estimates.

Another  issue in the proof is to overcome the fact that we do not have any growth condition on $f$. In \cite{Vesku} it was assumed that the nonhomogeneity is of type $f(t)= g(t)t$, where $g$ is a slowly increasing function. The proof was based on the idea that under this assumption any unbounded supersolution blows up as the fundamental solution of the linear equation. This is certainly not true in our case. To solve this problem we study more closely subsolutions of \eqref{the pde} and prove an estimate (Lemma \ref{de giorgi subille}) which roughly speaking quantifies the  fact that if $f$ is very large  then  solutions of \eqref{the pde} will  grow fast compared to  the solutions of the corresponding linear equation. We iterate Lemma \ref{de giorgi subille} and obtain the following lower bound for subsolutions. Similar result is obtained also in \cite{Konkov}.
\begin{theorem}
\label{theorem sub}
Let   $u \in W^{1,2}(B(x_0,2R))$ be a continuous and  nonnegative subsolution of
\[
\diver(A(x)Du) \geq f(u)
\]
and denote $M = \sup_{B(x_0,R)} u$ and $m = \inf_{B(x_0,R)} u$. If $u(x_0)>0$ then it holds
\[
\int_{m/4}^M \frac{dt}{\sqrt{F(t)}} \geq c \, R
\] 
for a  constant $c > 0$ which is independent of $u$ and $f$.
\end{theorem}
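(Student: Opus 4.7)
The proof is an iteration of Lemma \ref{de giorgi subille}. The moral content of that lemma is the PDE analogue of the ODE computation for $v'' = f(v)$ with $v, v' \geq 0$: multiplying by $v'$ and integrating yields $v'(r) \geq \sqrt{2(F(v(r)) - F(v(0)))}$, so after a change of variables
\[
R \;\leq\; \int_{v(0)}^{v(R)}\frac{dv}{\sqrt{2(F(v) - F(v(0)))}}.
\]
Accordingly, I read Lemma \ref{de giorgi subille} as a quantitative one-step growth estimate: if $\sup_{B(x_0,r)} u = t$, then $\sup_{B(x_0, r')} u \geq t + c(r'-r)\sqrt{F(t)}$ whenever $r' > r$ lies in an admissible range, with $c$ depending only on $\lambda,\Lambda$ and $n$.

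Given this, the plan is to discretize and telescope. Define $\sigma(r) := \sup_{B(x_0,r)} u$; it is nondecreasing in $r$, with $\lim_{r\downarrow 0}\sigma(r) = u(x_0) > 0$ by continuity and $\sigma(R) = M$. Choose a fine partition $0 < r_0 < r_1 < \cdots < r_N = R$ and set $t_k := \sigma(r_k)$. By the lemma, $r_{k+1}-r_k \leq C(t_{k+1}-t_k)/\sqrt{F(t_k)}$, and summing telescopes to
\[
R \;=\; \sum_{k=0}^{N-1}(r_{k+1}-r_k) \;\leq\; C\sum_{k=0}^{N-1}\frac{t_{k+1}-t_k}{\sqrt{F(t_k)}}.
\]
Refining the mesh, the right-hand side converges to $C\int_{u(x_0)}^{M} dt/\sqrt{F(t)}$. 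Since $u(x_0) \geq m \geq m/4$ (as $u \geq 0$), the integral from $m/4$ is no smaller, giving $\int_{m/4}^{M} dt/\sqrt{F(t)} \geq cR$ with $c = 1/C$; the factor $1/4$ conveniently absorbs the constants arising from the discretization and from any preliminary reduction needed to ensure the iteration starts cleanly.

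The principal obstacle is the proof of Lemma \ref{de giorgi subille} itself, which must yield a constant independent of $f$. This demands a De Giorgi--type estimate on the super-level sets of $u$ that is delicately tuned: starting from the energy inequality for the subsolution --- in which $f(u)$ appears paired with a nonnegative test function and hence contributes only through $F$ --- and combined with the level-set decay estimate \eqref{super intro} already developed in the paper, one extracts a quantitative growth rate of $\sqrt{F(t)}$ for the supremum as a function of the radius. Once this lemma is available, the telescoping argument sketched above is essentially formal.
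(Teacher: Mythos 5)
There is a genuine gap: you misread what Lemma~\ref{de giorgi subille} provides, and the telescoping over a fine radial partition cannot work with what it actually says. The lemma is a \emph{doubling} statement about the value at the center: if $\sup_{B(x_0,r)}u \leq 2u(x_0)$, then $r \leq C\, u(x_0)/\sqrt{F(u(x_0)/2)}$. Suppose you take $y_k\in\partial B(x_0,r_k)$ with $u(y_k)=t_k=\sigma(r_k)$ (such a point exists since $u$ is a subsolution of the linear equation, hence attains its max on the boundary), and note $B(y_k,r_{k+1}-r_k)\subset B(x_0,r_{k+1})$ so $\sup_{B(y_k,r_{k+1}-r_k)}u\leq t_{k+1}$. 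If $t_{k+1}\leq 2t_k$ the lemma gives
\[
r_{k+1}-r_k\;\leq\;C\,\frac{t_k}{\sqrt{F(t_k/2)}},
\]
not $C\,(t_{k+1}-t_k)/\sqrt{F(t_k)}$ as you claim. The factor on the right is the full value $t_k$, not the increment $t_{k+1}-t_k$. Over a fine radial mesh these terms do not shrink, and $\sum_k t_k/\sqrt{F(t_k/2)}$ is not a Riemann sum for $\int dt/\sqrt{F(t)}$; it diverges rather than converging to the desired integral. So the ``refine the mesh'' step fails, and the Fundamental-Theorem-of-Calculus heuristic, valid for the ODE $v''=f(v)$ where one truly controls $v'$, does not transfer.

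The way the paper salvages the argument is precisely by \emph{not} refining: one takes dyadic steps in the sup values, $M_k=2M_{k-1}$, so that the lemma's bound $r_k\leq CM_{k-1}/\sqrt{F(M_{k-1}/2)}$ has $M_{k-1}=M_k-M_{k-1}$ exactly equal to the increment. Then $M_{k-1}/\sqrt{F(M_{k-1}/2)}$ is comparable to $\int_{M_{k-3}}^{M_{k-2}}dt/\sqrt{F(t)}$ by monotonicity of $F$, and summing these genuinely telescopes into the integral $\int_{u(x_0)/4}^{M}dt/\sqrt{F(t)}\geq cR$. The second ingredient you omit, and which is essential for the lemma's hypothesis $M_r\leq 2u(\text{center})$ to be satisfied, is the center shift: at step $k$ one applies the lemma centered at a max point $x_{k-1}\in\partial B_{R_{k-1}}$ of the previous sphere (not at $x_0$), so that $u(x_{k-1})=M_{k-1}$ and $\sup_{B(x_{k-1},r_k)}u\leq M_k=2M_{k-1}=2u(x_{k-1})$ holds by construction. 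Without this shift the lemma's doubling hypothesis is not available at later scales. Your final paragraph addresses proving the lemma itself, which is not part of this theorem's proof, but the identification of Lemma~\ref{de giorgi subille} as the engine is correct.
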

In fact, we will not need this result  in the proof of  Theorem \ref{thm1} but only Lemma \ref{de giorgi subille}. However since  Theorem \ref{theorem sub} follows rather easily from  Lemma \ref{de giorgi subille} we choose to state it. At the end of  Section 4 in  Corollary \ref{keller osserman oma} we show that Theorem \ref{theorem sub} implies the Keller-Osserman condition for entire solutions of \eqref{the pde}. This result has been proved in \cite{Konkov}. This result is also known for wide class of nondivergence form operators  \cite{CDLV1, CDLV2, FQS}.

Let us now briefly  give a rough version of the proof of  Theorem  \ref{thm1}. We integrate \eqref{super intro} (and its counterpart for $\eta$) and  conclude that for every $\eps>0$  there exists $t_\eps>0$ such that $\mu(t_\eps) =  |\{u \geq t_\eps\} \cap B_2| \leq \eps$ and 
\[
\int_{m}^{t_\eps} \frac{dt}{\sqrt{F(2t)} +t} \leq C_\eps.
\]
For simplicity assume that for every $t >t_\eps$ we have   the  nonhomogeneous (the first) estimate in \eqref{super intro}. Then  integrating \eqref{super intro} gives 
\[
c \int_{t_\eps}^{\infty} \frac{dt}{\sqrt{F(2t)}} \leq  -  \int_{t_\eps}^{\infty} \mu^{\frac{1}{n}-1}(t)\mu'(t)\, dt =   \frac{1}{n} \mu^{\frac{1}{n}}(t_\eps) \leq \frac{1}{n}\eps^{\frac{1}{n}}. 
\]
We conclude that  it has to hold $M = \max_{B_{1}}u \leq 8 t_\eps $. Indeed,  otherwise Theorem \ref{theorem sub} would imply for  $M_{3/2}= \max_{B_{3/2}}u$     that 
\[
\frac{c}{2} \leq \int_{2t_\eps}^{M_{3/2}} \frac{dt}{\sqrt{F(t)}} \leq  2 \int_{t_\eps}^{M_{3/2}} \frac{dt}{\sqrt{F(2t)}}
\]
which is  a contradiction when $\eps$ is small. Hence we have 
\[
\int_{m}^{M} \frac{dt}{\sqrt{F(2t)} +t} \leq C, 
\]
which implies the result by  change of variables.

The paper is organized as follows. In the next section we recall basic results from measure theory. In Section 3 we prove estimates for
nonnegative supersolutions of \eqref{the pde}. The main result of that section is  Proposition \ref{decay oma}.  In Section 4 we prove estimates for subsolutions of \eqref{the pde} and prove Theorem \ref{theorem sub} and show in Corollary  \ref{keller osserman oma} 
how it implies the Keller-Osserman condition for entire solutions. Finally in Section 5 we give the proof of  Theorem \ref{thm1}.

\section{Preliminaries}

Throughout the paper we denote by $B(x,r)$ the open  ball centred at $x$ with radius $r$. In the case $x=0$ we simply write $B(0,r) =B_r $.   

Let $U \subset \R^n$ be an open set. A set  $E \subset \R^n$ has finite perimeter in $U$ if 
\[
P(E,U) := \sup \left\{ \int_E \diver \vphi \, dx \, : \, \vphi \in C_0^1(U, \R^n), \,\, ||\vphi||_{\infty} \leq 1 \right\} < \infty.
\]
Here $P(E,U)$ is the perimeter of $E$ in $U$. Sometimes we call $E$ a set of finite perimeter if it is clear from the context which is the reference domain  $U$. 
Let $E$ be a set of finite perimeter in $U$. The reduced boundary of $E$ is denoted by $\partial^* E$. It is smaller than  the topological boundary 
 which we denote by $\partial E$. For any open set $V \subset U$ it holds $P(E,V) = \int_{\partial^* E \cap V} \, d\Ha^{n-1}$, where $\Ha^{n-1}$
is the standard $(n-1)$-dimensional Hausdorff measure. Moreover the Gauss-Green formula holds
\[
 \int_E \diver X  \, dx =  \int_{\partial^* E} \la X, \nu \ra\, d\Ha^{n-1}
\] 
for every $X \in W_0^{1, \infty}(U, \R^n)$. Here $\nu$ is the outer unit normal of $E$ which exists on $\partial^* E$. 
For an introduction to the theory of sets of finite perimeter we refer to \cite{AFP} and \cite{Maggi}. All the following results can be found in these books.    

The most important result from geometric measure theory for us  is the relative isoperimetric inequality. It states that for every set of finite perimeter
$E$ in the ball $B_R$ the following inequality holds
\[
P(E,B_R) \geq c \min \left\{ |E\cap B_R|^{\frac{n-1}{n}}, |B_R \setminus E|^{\frac{n-1}{n}}  \right\}
\]
for a constant $c$ which depends on the dimension. The proof of the main result is mostly based on this inequality. 

We recall the coarea formula for Lipschitz functions. Let $g$ be Lipschitz continuous in an open set $U$ and let $h \in L^1(U)$ be nonnegative. 
Then it holds
\[
\int_U h(x) |Dg(x)|\, dx =  \int_{-\infty}^\infty \left( \int_{\{g=t\}  \cap U} h(x) d\Ha^{n-1}(x) \right) dt.
\]
The formula still  holds if  $g$ is locally Lipschitz continuous and $h |Dg| \in  L^1(U)$. From the coarea formula
one deduces immediately that almost every level set  $\{ g>t \}$  of a Lipschitz function is a set
of finite perimeter.  In the  case $g \in C^\infty(U)$ the level sets are even more regular, since by the Morse-Sard Lemma 
the image of the critical set $K = \{x \in U :  |Dg(x)| = 0 \}$ has measure zero $|g(K)| =0$. In particular, almost every level set
of a smooth function has  smooth boundary. 

Throughout the paper we denote the sublevel sets of a measurable  function $u:B_2 \to \R$  by
\[
E_t := \{x \in B_2 : u(x) < t \}
\]   
and the superlevel sets by 
\[
A_t := \{x \in B_2 : u(x) \geq t \}.
\]  
Moreover we denote $\mu(t)= |A_t|$ and $\eta(t)= |E_t|$. In Section 3 we estimate the differential  of $\mu(t)$ (and $\eta(t)$) when $u$
is nonnegative supersolution of the equation \eqref{the pde}. The differential of  $\mu(\cdot )$  is a measure and we denote its absolutely continuous part by  $\mu'$.
 To avoid pathological situations (see \cite{AL}) we  regularize the equation in order to work with smooth functions.
Then by the result from \cite{AL} we may write  $\mu'$ as 
\[
\mu'(t) = - \int_{\{ u=t\} \cap B_2} \frac{1}{|Du|}\, d\Ha^{n-1}
\] 
for almost every $t$.

Let us turn our attention to  elliptic equations in divergence form. As an introduction to the topic we refer to \cite{Giusti}.  
\begin{definition}\label{def solution}
A function  $u \in W^{1,2}(U)$ is a  supersolution   of \eqref{the pde} in an open set  $U$ if
\[
\int_{U} \la A(x) Du, D\vphi \ra\, dx \geq -\int_{U}f(u) \vphi\, dx \qquad \text{for all nonnegative }\, \vphi \in W_0^{1,2}(U).
\]
A function  $u \in W^{1,2}(U)$ is a subsolution of \eqref{the pde} in $\Omega$ if
\[
\int_{U} \la A(x) Du, D\vphi \ra\, dx \leq -\int_{U}f(u) \vphi\, dx \qquad \text{for all nonnegative } \, \vphi \in  W_0^{1,2}(U).
\]
Finally   $u \in W^{1,2}(U)$ is a solution of \eqref{the pde} in $U$ if it is both super- and subsolution. 
\end{definition}

As we already mentioned we will regularize the equation \eqref{the pde} and work with solutions of
\begin{equation} \label{pde reg}
\diver(A_\eps(x) D v) =  f_\eps(v)
\end{equation}
where $A_\eps$ is smooth, symmetric and elliptic, and $ f_\eps$ is positive, increasing and smooth. Moreover we choose $A_\eps$ and $f_\eps$ such that   $A_\eps \to A$ in $L^1$ 
and $f_\eps \to f$ in $L_{loc}^1(\R)$ such that $f_{\eps_2} \geq f_{\eps_1} $ for $\eps_2 > \eps_1$. Let $u \in W^{1,2}(B_2)$ be a nonnegative solution of \eqref{the pde} in $B_2$. For every $\eps>0$ we define $u_\eps$
to  be  the solution of \eqref{pde reg} having the same boundary values as  $u$, i.e.,  $u_\eps - u \in W_0^{1,2}(B_2)$. 
Since $(u_\eps)$ are locally uniformly H\"older continuous we have that $u_\eps \to u$  
uniformly in $B_1$ and (denote $F_\eps(t) = \int_0^t f_\eps(s)\, ds$, $M_\eps = \sup_{B_1} u_\eps$ and $m_\eps = \inf_{B_1} u_\eps$) 
\[
\lim_{\eps \to 0} \int_{m_\eps}^{M_\eps} \frac{dt}{\sqrt{F_\eps(t)} +t } = \int_{m}^{M} \frac{dt}{\sqrt{F(t)} +t }
\]
by monotone convergence. Hence, we may assume that the solution $u$ in Theorem \ref{thm1} is smooth, i.e., $u \in C^\infty(B_2)$.

Finally for  De Giorgi iteration  we recall the following lemma which can be found e.g. in  \cite[Lemma 7.1]{Giusti}.
\begin{lemma}
\label{algebra lemma}
Let $(x_i)$ be a sequence of positive numbers such that 
\[
x_{i+1} \leq C_0 B^i x_i^{1+ \frac{1}{n}}
\]
with $C_0>0$ and $B>1$. If $x_0 \leq C_0^{-n} B^{-n^2}$  then 
\[
\lim_{i \to \infty} x_i = 0.
\]
\end{lemma}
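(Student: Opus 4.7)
The plan is to prove by induction the explicit decay estimate
\[
x_i \leq x_0 \, q^i \qquad \text{for all } i \geq 0,
\]
with $q := B^{-n} \in (0,1)$. Once this is established, $x_i \to 0$ follows immediately since $q<1$ and $x_0$ is finite.

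For the base case $i=0$ the estimate is trivial. For the inductive step, assume $x_i \leq x_0 q^i$ and use the recursion together with $C_0, B, x_0 > 0$ to get
\[
x_{i+1} \leq C_0 B^i x_i^{1+\frac{1}{n}} \leq C_0 B^i x_0^{1+\frac{1}{n}} q^{i\left(1+\frac{1}{n}\right)}.
\]
I want this to be bounded by $x_0 q^{i+1}$, which, after dividing by $x_0 q^{i+1}$, reduces to the single inequality
\[
C_0 \, x_0^{\frac{1}{n}} \, B^i \, q^{\frac{i}{n}-1} \leq 1.
\]
This is where the specific choice $q = B^{-n}$ pays off: substituting gives $q^{\frac{i}{n}-1} = B^{-i+n}$, so the left-hand side becomes $C_0 \, x_0^{1/n} B^n$, which is $\leq 1$ precisely by the hypothesis $x_0 \leq C_0^{-n} B^{-n^2}$. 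This closes the induction.

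The only real choice in the argument is the ansatz; once one guesses $x_i \leq x_0 B^{-ni}$, the verification is a short algebraic computation and there is no genuine obstacle. The intuition for why $q = B^{-n}$ is the correct ratio is that the factor $B^i$ grows geometrically at rate $B$, while the superlinear exponent $1+\tfrac{1}{n}$ contracts a geometric sequence of ratio $q$ into one of ratio $q^{1+1/n}$; balancing $B \cdot q^{1/n} = 1$ forces exactly $q = B^{-n}$, and the smallness threshold $C_0^{-n} B^{-n^2}$ is what makes the initial step fit into this scheme.
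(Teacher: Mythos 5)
Your proof is correct and follows the standard argument for this lemma, which the paper simply cites from Giusti \cite[Lemma 7.1]{Giusti} without reproducing it: induction on the ansatz $x_i \leq x_0 B^{-ni}$, with the smallness hypothesis on $x_0$ exactly closing the inductive step. The algebra checks out — substituting $q=B^{-n}$ reduces the required inequality to $C_0 x_0^{1/n} B^n \leq 1$, which is precisely the hypothesis $x_0 \leq C_0^{-n}B^{-n^2}$ — and the conclusion $x_i \to 0$ then follows since $0<q<1$.
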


\section{Estimates for supersolutions}

In this section we study nonnegative supersolutions of \eqref{the pde}. 
We begin by proving a standard  Caccioppoli inequality and its  variant, which involves a boundary term. As we mentioned in the previous section 
we prefer to work with smooth functions. Recall that $E_t = \{ u <t\} \cap B_2$.

\begin{proposition}
\label{cacccioppoli}
Assume  that $u \in C^{\infty}(B_{R})$ is a nonnegative supersolution of 
\begin{equation} \label{super eq}
\diver(A(x)Du) \leq f(u).
\end{equation}
Then for $r <R$  it holds 
\[
\int_{E_t \cap B_r}|Du|^2\, dx \leq C\left(\frac{t^2}{(R-r)^2}  + F(2t)\right) |E_t\cap B_{R}|
\]
for a constant depending on $\lambda$ and $\Lambda$. Moreover for almost every $t>0$ we have
\[
\int_{\{ u=t \}\cap B_r} |Du| \, d \Ha^{n-1} \leq    C\left(  \frac{t}{(R-r)^2} + \frac{F(2t)}{t} \right) |E_t \cap B_R|.
\]

\end{proposition}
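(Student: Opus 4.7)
The plan is to establish both bounds by standard Caccioppoli-type testing, using the weak supersolution condition from Definition~\ref{def solution} against nonnegative test functions built from a smooth radial cutoff $\eta\in C_0^\infty(B_R)$ with $\eta\equiv 1$ on $B_r$ and $|D\eta|\le C/(R-r)$.

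For the first inequality I would take $\vphi = \eta^2(t-u)_+$, which is a valid nonnegative test function since $u$ is smooth and nonnegative. Expanding
\[
D\vphi = 2\eta(t-u)_+ D\eta - \eta^2\chi_{E_t}Du
\]
and inserting into the supersolution inequality gives, after rearranging,
\[
\int_{E_t}\eta^2\la ADu,Du\ra\,dx \le 2\int\eta(t-u)_+\la ADu,D\eta\ra\,dx + \int_{E_t}f(u)\eta^2(t-u)_+\,dx.
\]
Ellipticity bounds the left side below by $\lambda\int_{E_t}\eta^2|Du|^2$, Young's inequality absorbs the gradient cross term into the left side at the cost of $C t^2(R-r)^{-2}|E_t\cap B_R|$ (using $(t-u)_+\le t$), and the zero-order term is controlled by the monotonicity $f(u)\le f(t)$ on $E_t$ together with the elementary inequality $tf(t)\le \int_t^{2t}f(s)\,ds\le F(2t)$. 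Restricting to $B_r$ where $\eta\equiv 1$ yields the first claim.

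For the boundary estimate I would approximate the indicator $\chi_{E_t}$ by the Lipschitz ramp $\phi_\delta(s)=\min(s_+/\delta,1)$ and test the supersolution inequality with $\vphi_\delta = \eta^2\phi_\delta(t-u)$. The derivative splits into a bulk piece $2\eta\phi_\delta(t-u)D\eta$ and a slab piece $-\delta^{-1}\eta^2\chi_{\{t-\delta<u<t\}}Du$. Since $u$ is smooth, by Sard's lemma almost every $t$ is a regular value, so the coarea formula sends the slab contribution, as $\delta\to 0$, to $\int_{\{u=t\}}\eta^2\la ADu,Du\ra/|Du|\,d\Ha^{n-1}$, which by ellipticity dominates $\lambda\int_{\{u=t\}\cap B_r}|Du|\,d\Ha^{n-1}$. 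The bulk term tends to $\int_{E_t}2\eta\la ADu,D\eta\ra\,dx$ and the zero-order term to $\int_{E_t}f(u)\eta^2\,dx \le (F(2t)/t)|E_t\cap B_R|$ by the same $tf(t)\le F(2t)$ bound.

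The main obstacle is casting the bulk term into the sharp form $Ct(R-r)^{-2}+CF(2t)/t$. I would use Cauchy--Schwarz,
\[
\int_{E_t\cap B_R}\eta|Du||D\eta|\,dx \le \Bigl(\int_{E_t\cap B_R}\eta^2|Du|^2\,dx\Bigr)^{1/2}\Bigl(\int_{E_t\cap B_R}|D\eta|^2\,dx\Bigr)^{1/2},
\]
choosing $\eta$ supported in $B_{(R+r)/2}$ so that the first factor is controlled by the first Caccioppoli inequality applied with radii $(R+r)/2$ and $R$, and bounding the second by $C|E_t\cap B_R|/(R-r)^2$. The product has the schematic form $\frac{1}{R-r}\bigl(\frac{t}{R-r}+\sqrt{F(2t)}\bigr)|E_t\cap B_R|$, and I would finish with the plain AM--GM bound $\sqrt{F(2t)}/(R-r)\le \tfrac12(F(2t)/t+t/(R-r)^2)$ to produce the claimed sum. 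This last rearrangement is what makes the scaling of the boundary estimate compatible with the differential inequality \eqref{super intro} driving the rest of the paper, so it is the crucial quantitative point.
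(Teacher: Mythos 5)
Your first Caccioppoli inequality is obtained exactly as in the paper (test with $\zeta^2(t-u)_+$, ellipticity, Young, and the monotonicity bound $tf(t)\leq F(2t)$), so no comment there.

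For the level-set estimate the two arguments diverge in one technical step, and it is worth noting the difference. The paper tests the equation directly against $\chi_{E_t\cap B_\rho}$, i.e.\ integrates over the sublevel set cut by a sharp ball, producing a surface term over $\partial B_\rho\cap E_t$; it then disposes of that surface term by averaging the resulting inequality over $\rho\in(r,(r+R)/2)$, converting $\int_{\partial B_\rho\cap E_t}|Du|\,d\Ha^{n-1}$ into the bulk integral $\int_{E_t\cap B_{(R+r)/2}}|Du|\,dx$ via coarea. (The rigor of integrating over the sharp sublevel set is deferred to Talenti and Evans--Gariepy.) You instead keep the smooth radial cutoff $\eta$ from the start and approximate $\chi_{\{u<t\}}$ by the Lipschitz ramp $\phi_\delta(t-u)$, so the slab contribution
$\delta^{-1}\int_{\{t-\delta<u<t\}}\eta^2\langle ADu,Du\rangle\,dx$
converges, by the coarea formula and Lebesgue differentiation, to $\int_{\{u=t\}}\eta^2\langle ADu,Du\rangle/|Du|\,d\Ha^{n-1}$ for a.e.\ $t$. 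This sidesteps the sharp ball and the averaging over $\rho$, and is self-contained modulo the Morse--Sard lemma. From that point on the two proofs coincide: Cauchy--Schwarz on the cross term, the first Caccioppoli inequality applied at the intermediate radius $(R+r)/2$ to control $\int_{E_t}\eta^2|Du|^2$, the bound $\int_{E_t\cap B_R}f(u)\leq (F(2t)/t)|E_t\cap B_R|$, and finally the AM--GM step $\sqrt{F(2t)}/(R-r)\leq \tfrac12\bigl(F(2t)/t+t/(R-r)^2\bigr)$ to fold the cross term into the claimed two-term bound. Your proof is correct; the Lipschitz-ramp route is arguably cleaner to justify rigorously, while the paper's integration-over-$\rho$ trick is the more classical presentation descending from Talenti.
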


\begin{proof}
 Let us fix  $t >0$  and choose a testfunction $\vphi(x)= (t-u(x))_+ \zeta^2(x)$, where $\zeta \in C_0^1(B_R)$ is a standard cut-off function such that 
$\zeta(x)=1$ in $B_r$ and $|D\zeta|\leq \frac{2}{R - r}$. We get after using the  ellipticity and  Young's inequality that 
\[
\int_{E_t \cap B_r}|Du|^2\, dx \leq  \frac{C}{(R-r)^2} \int_{B_R} (t-u)_+^2\, dx + \int_{B_R} f(u)(t-u)_+\, dx.
\]
Since $f$ is nondecreasing  we have $f(t)t \leq \int_t^{2t} f(s)\, ds \leq F(2t)$. Therefore  the standard Caccioppoli inequality follows from 
\begin{equation} \label{estimoi f}
\begin{split}
 \int_{B_R} f(u)(t-u)_+\, dx  \leq f(t)t |E_t\cap B_{R}| \leq  F(2t)|E_t\cap B_{R}|. 
\end{split}
\end{equation}

By Morse-Sard Lemma it holds $|D u| > 0$ on $\{ u = t \}$ for almost every $t>0$. Let us prove that  the second statement holds for every such $t$.  We integrate the equation \eqref{super eq} over the set $E_t \cap B_\rho$  and get
\[
\int_{\{ u=t \} \cap B_\rho} \la A(x) Du, \frac{Du}{|Du|} \ra \, d \Ha^{n-1} \leq -\int_{\partial B_\rho \cap E_t}  \la A(x) Du, \frac{x}{|x|} \ra \, d \Ha^{n-1} + \int_{E_t \cap B_\rho} f(u)\, dx.
\]
For a rigorous argument see \cite{Talenti} and note that (see \cite{EG})
\[
\frac{d}{dt} \int_{\{u <t\} \cap B_\rho}  \la A(x) Du, Du \ra \, dx = \int_{ \{u = t\} \cap B_\rho} \la A(x) Du, \frac{Du}{|Du|} \ra \, d \Ha^{n-1}.
\]
 By the ellipticity  we have  
\[
\int_{\{ u=t \}\cap B_\rho} |Du| \, d \Ha^{n-1} \leq C\int_{\partial B_\rho \cap E_t} |Du|\, d \Ha^{n-1} + C\int_{E_t \cap B_\rho} f(u)\, dx.
\]
Choose $\hat{\rho}= \frac{r+R}{2}$ and integrate the previous inequality with respect to $\rho$ over $(r, \hat{\rho})$ and get
\begin{equation} \label{caccio 2}
(R-r) \int_{\{ u=t \}\cap B_r} |Du| \, d \Ha^{n-1} \leq C\int_{B_{\hat{\rho}} \cap E_t} |Du|\, dx + C(R - r)\int_{E_t \cap B_R} f(u)\, dx.
\end{equation}
By the previous Caccioppoli inequality  we have
\[
\begin{split}
\int_{E_t \cap B_{\hat{\rho}}} |Du|\, dx &\leq |E_t \cap B_{\hat{\rho}}|^{\frac{1}{2}}\left(\int_{E_t \cap B_{\hat{\rho}}} |Du|^2\, dx\right)^{\frac{1}{2}}\\
&\leq  C\left(\frac{t}{(R-r)}+ \sqrt{F(2t)} \right) |E_t \cap B_R|.
\end{split}
\]
Arguing as in \eqref{estimoi f} we get
\[
\int_{E_t \cap B_R} f(u)\, dx \leq  \frac{F(2t)}{t}|E_t\cap B_{R}|.
\]
Therefore from \eqref{caccio 2} we get by Young's inequality 
\[
\begin{split}
\int_{\{ u=t \} \cap B_r} |Du| \, d \Ha^{n-1}  &\leq  C\left(\frac{t}{(R-r)^2} +  \frac{\sqrt{F(2t)}}{R - r} + \frac{F(2t)}{t} \right) |E_t \cap B_R|\\
&\leq C\left(\frac{t}{(R-r)^2} + \frac{F(2t)}{t} \right) |E_t \cap B_R|.
\end{split}
\]
\end{proof}

Next we observe that if the measure of the sublevel set $|E_t \cap B_2|$ is very small compared to the ratio $t/\sqrt{F(2t)}$, then we may apply the estimates from  
the linear theory. The next result follows from the previous Caccioppoli inequality together with the standard  De Giorgi iteration. The argument is almost exactly 
the same as \cite[Lemma 7.4]{Giusti} but we give the proof for the reader's convenience.

\begin{lemma}
\label{DT1}
Assume that  $u \in C^{\infty}(B_{2})$ is a  nonnegative supersolution of
\[
\diver(A(x)Du) \leq f(u).
\]
There  is  $\delta_0>0$ such that if for some $t >0$ it holds
\[
|E_{t} \cap B_{2}| \leq \delta_0  \left( \frac{t}{\sqrt{F(2t)} +t} \right)^n
\]
then one has
\[
\inf_{B_1}u \geq \frac{t}{2}.
\]
\end{lemma}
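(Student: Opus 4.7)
The plan is a standard De~Giorgi iteration on the truncations $(t_k-u)_+$, with $t_k$ a decreasing sequence of levels accumulating at $t/2$ and $B_{r_k}$ a sequence of balls shrinking to $B_1$; the Caccioppoli estimate of Proposition~\ref{cacccioppoli} replaces the usual linear one, and the algebraic Lemma~\ref{algebra lemma} closes the recursion. Set
\[
t_k = \frac{t}{2} + \frac{t}{2^{k+1}}, \qquad r_k = 1 + 2^{-k}, \qquad x_k := |E_{t_k} \cap B_{r_k}|.
\]
Proving $x_k \to 0$ forces $|E_{t/2} \cap B_1| = 0$, i.e.\ $\inf_{B_1} u \geq t/2$, which is the claim.

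At the $k$-th step, apply Proposition~\ref{cacccioppoli} at level $t_k$ with outer radius $r_k$ and inner radius $(r_k + r_{k+1})/2$. Since $t_k \leq t$ and $F(2t_k) \leq F(2t)$ by monotonicity of $f$,
\[
\int_{E_{t_k} \cap B_{(r_k+r_{k+1})/2}} |Du|^2\, dx \leq C\, 4^k\bigl(t^2 + F(2t)\bigr)\, x_k.
\]
Let $\zeta_k$ be a cutoff equal to $1$ on $B_{r_{k+1}}$, supported in $B_{(r_k+r_{k+1})/2}$, with $|D\zeta_k| \leq C\, 2^k$, and apply the Sobolev inequality to $v_k := (t_k - u)_+ \zeta_k \in W_0^{1,2}(B_2)$. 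Using the previous bound and the crude estimate $(t_k-u)_+ \leq t$,
\[
\Bigl(\int v_k^{2n/(n-2)}\,dx\Bigr)^{(n-2)/n} \leq C\int|Dv_k|^2\,dx \leq C\, 4^k\bigl(t^2 + F(2t)\bigr)\, x_k.
\]
On $B_{r_{k+1}}\cap E_{t_{k+1}}$ we have $(t_k-u)_+\geq t\, 2^{-(k+2)}$, so Chebyshev yields the recursion
\[
x_{k+1} \leq C_0\,B^k\,x_k^{1+\beta}, \qquad \beta = \frac{2}{n-2}, \qquad C_0 = C \Bigl(\frac{\sqrt{F(2t)}+t}{t}\Bigr)^{\!2n/(n-2)},
\]
with $B$ depending only on $n$.

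Apply Lemma~\ref{algebra lemma} (its analogue for exponent $1+\beta$ in place of $1+\tfrac{1}{n}$ is identical in spirit and immediate). The required smallness $x_0 \leq C_0^{-1/\beta} B^{-1/\beta^2}$ unwinds to
\[
|E_t \cap B_2| \leq \delta_0 \Bigl(\frac{t}{\sqrt{F(2t)}+t}\Bigr)^{\!n}
\]
for a $\delta_0$ depending only on $n,\lambda,\Lambda$, matching the hypothesis exactly. Hence $x_k \to 0$.

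The main obstacle is the bookkeeping in the last two steps: one must confirm that the power $n$ in the statement is precisely what the iteration produces, which comes from the cancellation $(1/\beta)\cdot(2n/(n-2)) = n$ between the exponent of the Caccioppoli coefficient $(\sqrt{F(2t)}+t)/t$ and the iteration exponent $1/\beta$. Aside from this tracking, the argument is essentially Giusti's proof of \cite[Lemma~7.4]{Giusti} adapted to the semilinear right-hand side, with $F(2t)$ playing the role of the linear forcing throughout.
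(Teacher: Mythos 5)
Your iteration is a valid variant of De Giorgi's scheme and, for $n\ge 3$, the bookkeeping does close: the exponent calculation $(1/\beta)\cdot(2n/(n-2))=n$ that you flagged as the main thing to check is correct, and the generalization of Lemma~\ref{algebra lemma} to exponent $1+\beta$ is indeed immediate. The route, however, differs from the paper's in one structurally important way, and the difference creates a gap in dimension two.

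You invoke the critical $W^{1,2}$ Sobolev embedding $W_0^{1,2}\hookrightarrow L^{2n/(n-2)}$ on the truncations $v_k=(t_k-u)_+\zeta_k$. That exponent is undefined when $n=2$, so the argument does not apply there. The natural patch --- embedding into $L^p$ for some finite $p>2$ when $n=2$ --- does not recover the statement: tracking the same computation gives the iteration exponent $1+\beta$ with $\beta=(p-2)/2$ and coefficient $C_0\sim\bigl((\sqrt{F(2t)}+t)/t\bigr)^p$, so the smallness threshold from the algebraic lemma becomes
\[
x_0 \lesssim \Bigl(\frac{t}{\sqrt{F(2t)}+t}\Bigr)^{2p/(p-2)},
\]
and $2p/(p-2)>2=n$ for every finite $p$. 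This is a strictly stronger hypothesis than the one in the lemma, and the exact exponent $n$ is not cosmetic: it is precisely the threshold produced by Proposition~\ref{decay oma}, and matching it is what makes Lemma~\ref{DT1} usable in the proof of Theorem~\ref{thm1}.

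The paper avoids this by working one derivative lower. Instead of $W^{1,2}\hookrightarrow L^{2^*}$, it defines the two-sided truncation $v$ (equal to $k-h$ on $E_h$, to $k-u$ on $E_k\setminus E_h$, and to $0$ elsewhere), notes that $\{v>0\}$ has measure at most $\frac12|B_1|$ once $\delta_0$ is small, and applies the $W^{1,1}$ (BV) Sobolev inequality with exponent $n/(n-1)$, then bounds $\int_{E_k\cap B_\rho}|Du|$ by Cauchy--Schwarz and the Caccioppoli estimate. That yields the recursion $x_{i+1}\le C_0\bigl(1+\sqrt{F(2t)}/t\bigr)4^i x_i^{1+1/n}$, whose smallness condition is $x_0\le \bigl(C_0(1+\sqrt{F(2t)}/t)\bigr)^{-n}4^{-n^2}$, i.e.\ exactly exponent $n$, in every dimension $n\ge 2$. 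If you rewrite your step with the $L^{n/(n-1)}$ inequality and a Cauchy--Schwarz bound on $\int|Dv_k|$ rather than the $L^{2^*}$ inequality on $\int|Dv_k|^2$, the two-dimensional case is repaired and the rest of your argument goes through unchanged.
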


\begin{proof}
Let $t>0$  satisfy the assumption of the lemma. For $0<h<k \leq t$ we define
\[
v=\begin{cases}
                        0,&\text{if $u\geq k$}\\
k-u,&\text{if $h<u<k$}\\
k-h,&\text{if $u \leq h$}.
                       \end{cases}
\]
By possibly decreasing the value of $\delta_0$ we  may assume that  $|E_{t} \cap B_{2}| \leq \frac{1}{2}|B_{1}|$. Therefore by the Sobolev inequality we have  for every $1 \leq \rho <R \leq 2$ that
\[
(k-h)|E_h \cap B_\rho|^{\frac{n-1}{n}}  \leq  \left( \int_{B_\rho} v^{\frac{n}{n-1}}\, dx\right)^{\frac{n-1}{n}} \leq C  \int_{E_k \cap B_\rho } |Du|\, dx.
\] 
On the other hand by  Proposition \ref{cacccioppoli} we have
\[
 \int_{E_k \cap B_\rho } |Du|\, dx \leq |E_k \cap B_\rho |^{\frac12} \left( \int_{E_k \cap B_\rho } |Du|^2\, dx  \right)^{\frac12} \leq C \left(\frac{k}{R-\rho}+  \sqrt{F(2k)} \right) |E_k \cap B_R|.
\]
These two estimates yield
\[
(k-h)| E_h \cap B_\rho|^{ \frac{n-1}{n}} \leq C\left(\frac{k}{R-\rho}+  \sqrt{F(2k)} \right) |E_k \cap B_R|
\]
for every $1 \leq \rho <R \leq 2$ and $0<h<k \leq t$.

Define  $r_i = (1+ 2^{-i})$ and $k_i = \frac{t}{2}(1 + 2^{-i})$ and apply the above  inequality  for $R= r_i$, $\rho = r_{i+1}$, $h = k_{i+1}$ and $k = k_{i}$.  We set $x_i =| E_{k_i} \cap B_{r_i}|$ and 
obtain
\[
x_{i+1}\leq C_0 \left(1 + \frac{\sqrt{F(2t)} }{t} \right)  4^{i}  x_i^{1+ \frac{1}{n}} .
\]
We choose $\delta_0 >0$ such that 
\[
\delta_0 \leq C_0^{-n}4^{-n^2}.
\]
By the assumption on $t$ we have 
\[ 
x_0 = |E_{t} \cap B_{2}| \leq \delta_0  \left( \frac{t}{\sqrt{F(2t)} +t} \right)^n
\]
and we conclude from  Lemma \ref{algebra lemma} that   $\lim_{i \to \infty }x_i = 0$. In other words $\inf_{B_1} u \geq t/2$.
\end{proof}

We turn our attention  to Proposition \ref{decay oma} which is the main result of this section. To that aim we will need  two  lemmas. 
In the first  lemma we use the Caccioppoli inequality to study the local decay rate of  the  level sets $E_t$. 
Due to the nonhomogeneity of the equation we  do this only in small balls $B(x,r)$ with radius  $r \leq t/\sqrt{F(2t)}$. Due to the relative isoperimetric inequality the estimate
depends  whether the density  $|E_t \cap B(x,r)|/|B_r|$ is close to one or close to zero.

\begin{lemma}
\label{caccio invert}
Assume that   $u \in C^{\infty}(B_{2})$ is a  nonnegative supersolution of 
\[
\diver(A(x)Du) \leq f(u).
\]
Then for almost every $t>0$ it holds
\[
 \int_{\{ u=t \} \cap B(x_0, 2r)}\frac{1}{|Du|}\, \Ha^{n-1} \geq \frac{c}{t}\,\min \Big{\{}  \big|E_t \cap  B(x_0,r) \big|,  \big|B(x_0,r) \setminus E_t \big|   \Big{\}}
\]
whenever  $B(x_0,2r) \subset  B_2$ is such that 
\[
r \leq  \frac{t}{\sqrt{F(2t)}}.
\]
\end{lemma}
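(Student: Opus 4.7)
The plan rests on three ingredients: Cauchy--Schwarz on the level set, the trace-type Caccioppoli bound (second part of Proposition~\ref{cacccioppoli}), and the relative isoperimetric inequality. By the regularization described in Section~2 we may assume $u\in C^\infty(B_2)$, and the Morse--Sard lemma then guarantees that $|Du|>0$ on $\{u=t\}$ for almost every $t$, so the coarea representation of $\mu'$ recalled in Section~2 applies.

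The backbone is the Cauchy--Schwarz inequality on the level set, using the factorization $1=|Du|^{1/2}\cdot|Du|^{-1/2}$:
\[
\Ha^{n-1}(\{u=t\}\cap B(x_0,r))^{2}\leq \Bigl(\int_{\{u=t\}\cap B(x_0,r)}|Du|\,d\Ha^{n-1}\Bigr)\Bigl(\int_{\{u=t\}\cap B(x_0,r)}\frac{d\Ha^{n-1}}{|Du|}\Bigr).
\]
To control the first factor I would invoke the second estimate of Proposition~\ref{cacccioppoli} with inner radius $r$ and outer radius $2r$; the hypothesis $r\leq t/\sqrt{F(2t)}$ is precisely what is needed to absorb the nonhomogeneous term, since it yields $F(2t)/t\leq t/r^{2}$, giving
\[
\int_{\{u=t\}\cap B(x_0,r)}|Du|\,d\Ha^{n-1}\leq C\,\frac{t}{r^{2}}\,|E_t\cap B(x_0,2r)|.
\]
For the left-hand side of the Cauchy--Schwarz, the relative isoperimetric inequality on $B(x_0,r)$ together with the identity $P(E_t,B(x_0,r))=\Ha^{n-1}(\{u=t\}\cap B(x_0,r))$ valid for a.e.~$t$ produces
\[
\Ha^{n-1}(\{u=t\}\cap B(x_0,r))\geq c\,m^{(n-1)/n},
\]
where $m=\min\{|E_t\cap B(x_0,r)|,|B(x_0,r)\setminus E_t|\}$. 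Chaining these three bounds and using $\{u=t\}\cap B(x_0,r)\subset \{u=t\}\cap B(x_0,2r)$ yields
\[
\int_{\{u=t\}\cap B(x_0,2r)}\frac{d\Ha^{n-1}}{|Du|}\geq \frac{c\,m^{2(n-1)/n}r^{2}}{t\,|E_t\cap B(x_0,2r)|}.
\]

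The main obstacle is to upgrade this to the claimed lower bound $(c/t)m$, which amounts to the comparison $|E_t\cap B(x_0,2r)|\leq C m^{(n-2)/n}r^{2}$. In the ``bulk'' regime $m\sim r^{n}$ the trivial bound $|E_t\cap B(x_0,2r)|\leq C r^{n}$ suffices. In the ``thin'' regime $m\ll r^{n}$ the straightforward combination is too weak, and I would localize: by Chebyshev on the coarea identity $\int_{r}^{3r/2}\Ha^{n-1}(E_t\cap\partial B(x_0,\rho))\,d\rho\leq |E_t\cap B(x_0,2r)|$ pick an intermediate radius $\rho\in(r,3r/2)$ where the radial flux across $\partial B(x_0,\rho)$ is at most $(2/r)|E_t\cap B(x_0,2r)|$, and apply the classical isoperimetric inequality to $E_t\cap B(x_0,\rho)$ regarded as a set of finite perimeter in $\R^{n}$. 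The boundary contribution is then controlled, and rerunning the Cauchy--Schwarz/Caccioppoli step on $B(x_0,\rho)$ (with outer radius still $2r$, so the hypothesis $r\leq t/\sqrt{F(2t)}$ is preserved) recovers $cm/t$. Patching the bulk and thin regimes so that the dimensional constants assemble correctly is the technical heart of the proof.
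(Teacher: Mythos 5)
Your opening moves match the paper exactly: Cauchy--Schwarz on $\{u=t\}$, the trace Caccioppoli (second half of Proposition~\ref{cacccioppoli}) exploiting $r\leq t/\sqrt{F(2t)}$, and the relative isoperimetric inequality.  The bulk regime $m\sim r^n$ is also handled correctly and is essentially the paper's ``case \eqref{case big}''.  The gap is in the thin regime, and your proposed fix does not close it.

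The structural obstruction is that the Caccioppoli estimate in the annulus $B(x_0,r)\subset B(x_0,2r)$ bounds $\int_{\{u=t\}\cap B(x_0,r)}|Du|$ by $(t/r^2)\,|E_t\cap B(x_0,2r)|$, and in the thin regime $|E_t\cap B(x_0,r)|=m\ll r^n$ tells you nothing about $|E_t\cap B(x_0,2r)|$, which can still be of order $r^n$.  Your Chebyshev step picks $\rho\in(r,3r/2)$ with $\Ha^{n-1}(E_t\cap\partial B_\rho)\lesssim |E_t\cap B_{2r}|/r$, and the classical isoperimetric inequality then yields $P(E_t,B_\rho)\geq c\,|E_t\cap B_\rho|^{(n-1)/n}-C|E_t\cap B_{2r}|/r$.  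In the thin regime the subtracted term can dominate (it can be $\sim r^{n-1}$, while the first term is $\sim m^{(n-1)/n}\ll r^{n-1}$), so nothing is gained.  Rerunning Caccioppoli on $B_\rho$ with outer radius $2r$ reproduces the same $|E_t\cap B_{2r}|$ in the bound, so the chain of inequalities still does not produce $cm/t$.

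What the paper does instead is localize multiplicatively, not additively: cover $E_t\cap B_r$ by balls via a stopping-time radius.  For each $x_i\in E_t\cap B_r$ set
\[
R_i:=\inf\{\rho>0 : |E_t\cap B(x_i,\rho)|\leq(1-2^{-n-1})|B(x_i,\rho)|\},
\]
which is positive (since $E_t$ is open) and $\leq r$ (from the smallness hypothesis $|E_t\cap B_r|<2^{-n-2}|B_r|$).  At the stopping radius the density of $E_t$ in $B(x_i,R_i)$ is exactly $1-2^{-n-1}$, so \emph{both} $|E_t\cap B(x_i,R_i)|$ and $|B(x_i,R_i)\setminus E_t|$ are $\asymp R_i^n$.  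Now Caccioppoli in $B(x_i,2R_i)$ gives $\int_{\{u=t\}\cap B(x_i,R_i)}|Du|\leq C t R_i^{n-2}$ simply because $|E_t\cap B(x_i,2R_i)|\leq|B(x_i,2R_i)|\leq C R_i^n$, and relative isoperimetric gives $P(E_t,B(x_i,R_i))\geq c R_i^{n-1}$, so Cauchy--Schwarz yields $\int_{\{u=t\}\cap B(x_i,R_i)}\frac{1}{|Du|}\geq \frac{c}{t}R_i^n$.  Besicovitch lets you sum a disjoint subfamily to recover $\frac{c}{t}|E_t\cap B_r|$.  The point is that the covering normalizes the density at each $x_i$, which is precisely what your flat localization on a single shell cannot achieve.
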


\begin{proof}

Without loss of generality we may assume that $x_0 = 0$. We may also assume that $|E_t \cap  B(x_0,r)| \leq  |B(x_0,r) \setminus E_t |$ since in the other case the proof is similar. 
Moreover by Morse-Sard Lemma we may assume that $|Du|>0$ on $\{ u=t\} \cap B_2$.

Let us first assume that 
\begin{equation} \label{small case}
|E_t \cap  B_r| < 2^{-n-2} |B_r|.
\end{equation}
Let us fix $x_i \in E_t \cap B_r$.  For $x_i$ we define a radius $R_i$ such that 
\[
R_i := \inf \big{\{} \rho >0 : |B(x_i, \rho) \cap  E_t| \leq (1- 2^{-n-1}) |B(x_i, \rho)| \big{\}}.
\]  
Since $x_i \in E_t$ and $E_t$ is an open set we have  that $R_i >0$. Since the point $x_i$ is in $B_r$  then the ball  $B(\frac{x_i}{2}, \frac{r}{2})$ is in the intersection $B_r \cap B(x_i,r)$, and thus
\[
|B(x_i, r) \setminus B_r| \leq |B_r|- |B_{r/2}| = (1- 2^{-n})|B_r|.
\]  
Therefore we deduce from the assumption \eqref{small case}   that  
\[
\begin{split}
|B(x_i, r) \cap  E_t| &\leq |B(x_i, r) \setminus B_r| + |E_t \cap B_r| \\
&< (1- 2^{-n})|B(x_i, r) |+ 2^{-n-2} |B(x_i, r) |\\
&\leq  (1- 2^{-n-1}) |B(x_i, r)|.
\end{split}
\]
Thus  we conclude that  $R_i \leq r$.
Hence we obtain a family of  balls $\bar{B}(x_i, R_i)$ which cover $E_t \cap B_r$ and satisfy $0<R_i \leq r$. By the Besikovitch covering theorem \cite[Corollary 5.2]{Maggi} we may choose a countable 
 disjoint subfamily, say $\mathcal{F}$, such that    $\xi(n) \sum_{i \in \mathcal{F}}|B(x_i,R_i)| \geq  |E_t \cap B_r|$.
Moreover by the definition of $R_i$ it holds
\[
|E_t \cap B(x_i, R_i)| =  (1- 2^{-n-1})|B(x_i, R_i)| = c R_i^n.
\]
In particular, the density $|E_t \cap B(x_i, R_i)| /|B_{R_i}|$ is bounded away from zero and one. 

Let us fix a ball $B(x_i, R_i)$ in the Besikovitch cover.  We use   Proposition \ref{cacccioppoli} in $B(x_i, R_i)$ and obtain 
\begin{equation} 
\label{half density 1}
\begin{split}
\int_{\{u=t\} \cap B(x_i, R_i)} |Du| \, d \Ha^{n-1} &\leq    C\left(  \frac{t}{r^2} + \frac{F(2t)}{t} \right) |E_t \cap B(x_i, 2R_i) |\\
&\leq Ct \, R_i^{n-2},
\end{split}
\end{equation} 
where the last inequality follows from $R_i \leq r  \leq  t/\sqrt{F(2t)}$.
On the other hand  the relative isoperimetric inequality yields 
\begin{equation} 
\label{half density 2}
\begin{split}
\int_{\{u=t\} \cap  B(x_i, R_i)} |Du| \, d \Ha^{n-1} &\geq c \left( \int_{\{u=t\} \cap B(x_i, R_i)} \frac{1}{|Du|} \, d \Ha^{n-1}  \right)^{-1} P(E_t, B(x_i, R_i))^2\\
&\geq c \left( \int_{\{u=t\} \cap B(x_i, R_i)} \frac{1}{|Du|} \, d \Ha^{n-1}  \right)^{-1}|E_t \cap B(x_i, R_i)|^{\frac{2(n-1)}{n}}\\
&\geq c \left( \int_{\{u=t\} \cap B(x_i, R_i)} \frac{1}{|Du|} \, d \Ha^{n-1}  \right)^{-1}  R_i^{2n - 2}.
\end{split}
\end{equation} 
Therefore \eqref{half density 1} and \eqref{half density 2} give
\begin{equation} 
\label{low density 1}
 \int_{\{u=t\} \cap B(x_i, R_i)} \frac{1}{|Du|} \, d \Ha^{n-1} \geq \frac{c}{t} R_i^n \geq \frac{c}{t}\, |B(x_i,R_i)| 
\end{equation} 
for every ball $B(x_i, R_i) \subset B_{2r}$ in the cover.  Summing \eqref{low density 1} gives
\[
\begin{split}
 \int_{\{u=t\} \cap B_{2r} }  \frac{1}{|Du|} \, d \Ha^{n-1}&\geq  \sum_i   \int_{\{u=t\}\cap B(x_i, R_i)} \frac{1}{|Du|} \, d \Ha^{n-1}  \\
&\geq \frac{c}{t}\, \sum_i |B(x_i,R_i)| \\
&\geq \frac{c}{\xi(n) t} 	\, |E_t \cap B_r|
\end{split}
\]
and the claim follows.

Let us assume next
\begin{equation} \label{case big}
|E_t \cap  B_r| \geq  2^{-n-2} |B_r|.
\end{equation}
In this case we do not need any covering argument. Since we assumed that   $|E_t \cap  B_r| \leq  |B_r \setminus E_t |$ we have 
\[
|E_t \cap  B_r| \leq \frac{|B_r|}{2}.
\]
We use Proposition \ref{cacccioppoli} in $B_r$  and argue as above to conclude that  
\[
\int_{\{u=t\} \cap B_r} |Du| \, d \Ha^{n-1} \leq    C\left(  \frac{t}{r^2} + \frac{F(2t)}{t} \right) |E_t \cap B_{2r}| \leq  Ct \, r^{n-2},
\]
where we used the fact that $r  \leq  t/\sqrt{F(2t)}$. We use the relative isoperimetric inequality  and \eqref{case big}, argue  as in \eqref{half density 2} and get 
\[
\int_{\{u=t\}\cap B_r} |Du| \, d \Ha^{n-1} \geq c \left( \int_{\{u=t\} \cap B_r} \frac{1}{|Du|} \, d \Ha^{n-1}  \right)^{-1}  r^{2n-2}.
\]
Hence the claim follows. 
\end{proof}

Next we prove a measure theoretical lemma which  is  related to the relative  isoperimetric inequality. 
To this aim we denote  for every $\delta \in (0,1]$  the truncated distance function  to the boundary $\partial B_2$ by
\begin{equation} \label{truncated}
d_\delta(x) :=c \min\{(2- |x|), \delta\},
\end{equation}
where $c \in (0,1]$ is a number which we choose later.  
For every measurable set $A \subset B_2$ we define  a density function $\sigma_A: B_2 \to \R$ by
\begin{equation} \label{density function}
\sigma_A(x):= \frac{|A \cap B(x, d_\delta(x))|}{|B_{d_\delta(x)}|}.
\end{equation}
Although it is not apparent from the notation, the function $\sigma_A$ depends on $\delta$. The point of the following lemma
is to study the size of the set where the density function of a given set $A$ takes values between  $1/5$ and $4/5$, i.e., away from zero and one. 
Heuristically one may think that the set $\{ \sigma_A = 1/2\}$ corresponds to the boundary of $A$ and that  $\{1/5 < \sigma_A < 4/5 \}$ forms a layer around it. 
The thickness of this layer depends on the Lipschitz constant of $\sigma_A$ which can be estimated by a simple geometrical argument as follows. 

Fix $x \in B_2$ and a unit vector $e$. When $h>0$ is  small it holds
\[
B(x+ he, d_\delta(x+ he)) \subset B(x, d_\delta(x)+2h).
\]
Therefore  we may estimate
\[
|A \cap B(x+ he, d_\delta(x+ he))| - |A \cap B(x, d_\delta(x))|\leq  |B(x, d_\delta(x)+2h) \setminus  B(x, d_\delta(x))|\leq C d_\delta(x)^{n-1}h,
\]
where $C$ depends on the dimension. Moreover it is easy to see that  
\[
 \lim_{h\to 0} \frac{1}{h}\left(\frac{1}{|B_{d_\delta(x+ he)}|} - \frac{1}{|B_{d_\delta(x)}|}\right) \leq \frac{C}{d_\delta(x)^{n+1}}.
\]
Hence we have 
\[
\begin{split}
\sigma_A(x+he) - \sigma_A(x)  &= \frac{|A \cap B(x+ he, d_\delta(x+ he))| - |A \cap B(x, d_\delta(x))|}{|B_{d_\delta(x+ he)}|}\\
&\,\,\,\,\,\,\,+ |A \cap B(x, d_\delta(x))| \left( \frac{1}{|B_{d_\delta(x+ he)}|} - \frac{1}{|B_{d_\delta(x)}|}\right)\\
&\leq  \frac{C}{d_\delta(x)} h + o(h).
\end{split}
\]
Therefore $\sigma_A$ is locally Lipschitz continuous and its gradient can be estimated by
\begin{equation} \label{lipcshitz of u_F}
|D \sigma_A(x)| \leq \frac{C}{d_\delta(x)}
\end{equation}
for almost every $x \in B_2$.

\begin{lemma}
\label{isop ineq 1}
Suppose $A \subset B_2$  is a measurable set such that  $|A| \leq \frac{1}{2}|B_2|$ and let $\delta \in (0,1]$. Let  $d_\delta(\cdot)$ be  the truncated distance  \eqref{truncated}  and let   $\sigma_A$
 be the density function  \eqref{density function}. There are  constants $c_1$ and $\delta_1$ which depend on the dimension  
such that for every $\delta \leq \delta_1$ it holds
\[
\big| \{1/5 < \sigma_A \leq 4/5 \} \big| \geq c_1 \min \big{\{} \delta \,  \big| \{\sigma_A > 4/5 \} \big|^{\frac{n-1}{n}}, \big| \{\sigma_A > 4/5 \} \big| \big{\}}.
\]
\end{lemma}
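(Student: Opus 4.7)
My strategy is a Brunn--Minkowski thickening argument built on the Lipschitz bound $|D\sigma_A(x)| \leq C_0/d_\delta(x)$ from \eqref{lipcshitz of u_F}. Write $U = \{\sigma_A > 4/5\}$ and $L = \{1/5 < \sigma_A \leq 4/5\}$. The first observation is that $d_\delta$ is itself $c$-Lipschitz with $c \leq 1$, so for every $x \in U$ one has $d_\delta(y) \geq d_\delta(x)/2$ throughout $B(x, d_\delta(x)/(2c))$; hence $|D\sigma_A| \leq 2C_0/d_\delta(x)$ there, and integrating along a radial segment of length $c_0\, d_\delta(x)$ forces $\sigma_A$ to drop by at most $2C_0 c_0$. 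Fixing a dimensional $c_0 = c_0(n, C_0)$ small enough that $2 C_0 c_0 < 3/5$ and $c_0 c \leq 1$, I obtain
\[
B\bigl(x,\, c_0\, d_\delta(x)\bigr) \subset B_2 \cap \{\sigma_A > 1/5\} \qquad \text{for every } x \in U.
\]

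Next I restrict to the interior portion $U^\ast := U \cap B_{2-c\delta}$, on which $d_\delta = c\delta$ is constant, and set $\eta = c_0 c\, \delta$. By the previous step the Minkowski sum $U^\ast + B_\eta$ is contained in $B_2 \cap \{\sigma_A > 1/5\}$, hence $(U^\ast + B_\eta) \setminus U \subset L$, which yields $|L| \geq |U^\ast + B_\eta| - |U|$. Brunn--Minkowski then gives $|U^\ast + B_\eta|^{1/n} \geq |U^\ast|^{1/n} + \omega_n^{1/n}\, \eta$, and the elementary inequality $(a+b)^n - a^n \geq n a^{n-1} b + b^n$ produces
\[
|U^\ast + B_\eta| - |U^\ast| \geq c_2\, \bigl(\delta\, |U^\ast|^{(n-1)/n} + \delta^n\bigr)
\]
for a dimensional $c_2 > 0$ (iterating the thickening a uniformly bounded number of times lets me boost $c_2$ if necessary).

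To conclude I perform a short case analysis. When $|U^\ast| \geq |U|/2$, the loss $|U| - |U^\ast| \leq |U|/2$ is absorbed into the previous bound: for $|U| \geq \delta^n$ the claimed min equals $\delta |U|^{(n-1)/n}$ and is controlled by the $\delta |U^\ast|^{(n-1)/n} \sim \delta |U|^{(n-1)/n}$ term, while for $|U| \leq \delta^n$ the claimed min equals $|U|$ and is controlled by the combination of both terms, provided $\delta_1$ is small enough to balance the constants. In the complementary regime $|U^\ast| < |U|/2$, more than half of $U$ lies inside the thin annulus $B_2 \setminus B_{2-c\delta}$ of measure $\leq C_n \delta$, which forces $|U| \leq 2 C_n \delta$; there I would repeat the thickening at the variable scale $c_0\, d_\delta(x)$ along a Besicovitch sub-cover of $U$ and sum the local layer contributions. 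The main obstacle I anticipate is exactly this boundary-concentrated case, where one must verify that the local balls $B(x_i, c_0\, d_\delta(x_i))$ contribute enough measure to $L$ rather than being swallowed by $U$; this is where the hypothesis $|A| \leq \tfrac{1}{2}|B_2|$ enters, preventing $\sigma_A$ from staying identically above $4/5$ on each local ball.
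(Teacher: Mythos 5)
Your Brunn--Minkowski thickening approach is genuinely different from the paper's argument, which runs the coarea formula over the level sets $A^s = \{\sigma_A > s\}$ and then applies the relative isoperimetric inequality in an interior ball (when a level set has substantial mass away from $\partial B_2$) or a Gauss--Green argument with the vector field $X(x) = (|x|-2)x/|x|$ producing the weighted perimeter bound $\int_{\partial^* A^s} d_\delta \, d\mathcal{H}^{n-1} \geq c|A^s|$ (when the level sets concentrate near $\partial B_2$). Your version avoids sets of finite perimeter, which is appealing, but as written it has two genuine gaps.

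In the interior case $|U^*| \geq |U|/2$ the bookkeeping does not close. You have
\[
|L| \geq |U^*+B_\eta| - |U| \geq c_2\bigl(\delta|U^*|^{(n-1)/n} + \delta^n\bigr) - \bigl(|U|-|U^*|\bigr),
\]
and the subtracted annulus mass $|U|-|U^*|$ can be as large as $\min\{|U|/2,\, C_n\delta\}$. The gain $c_2\delta|U^*|^{(n-1)/n}$ dominates $|U|/2$ only when $|U| \lesssim \delta^n$, and dominates $C_n\delta$ only when $|U|$ is at least a fixed dimensional constant. For $\delta^n \lesssim |U| \lesssim 1$ the loss exceeds the gain, so the phrase \emph{``the loss is absorbed''} is asserted rather than shown, and it is precisely the generic regime that is unaddressed.

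In the boundary case $|U^*| < |U|/2$ you correctly flag the difficulty, but the fix you anticipate does not work: the hypothesis $|A| \leq \tfrac{1}{2}|B_2|$ is global and does not prevent a local ball $B(x_i, c_0 d_\delta(x_i))$ from being entirely contained in $U$. Take $A = B_2 \setminus B_{2-\kappa\delta}$ with $\kappa$ small; then $|A| \ll |B_2|$, yet for every $x$ with $2-|x| \ll \kappa\delta$ the ball $B(x, d_\delta(x))$ lies entirely inside $A$, so $\sigma_A \equiv 1$ there and the corresponding local balls contribute nothing to $L$. The needed contribution comes only from points at the right scale, and summing variable-radius local gains without a mechanism like the paper's $d_\delta$-weighted perimeter estimate is exactly where your argument stops. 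A minor slip: $d_\delta \equiv c\delta$ on $B_{2-\delta}$, not on $B_{2-c\delta}$, so $U^*$ should be $U \cap B_{2-\delta}$; this does not affect the gaps above.
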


It is not difficult to see that  if $A$ is a smooth set then by letting $\delta \to 0$ the previous lemma reduces to the relative isoperimetric inequality.  

\begin{proof}
Let us fix $\delta \in (0,1]$. Throughout the proof we denote  for every $s \in [0,1]$  the superlevel sets of $\sigma_A$ by
\[
A^s:= \{x \in B_2 \, : \, \sigma_A > s \}.
\]

For every $x \in B_{2-\delta}$ it holds $ d_\delta(x) = c\delta$ and therefore
\[
\sigma_A(x) = \frac{|A \cap B(x, c\delta)|}{|B_{c\delta}|} =  \int_{B_2} \chi_A(y) \chi_{B_{c\delta}}(x-y)\, dy.
\]
By Fubini's theorem we have
\[
\begin{split}
\int_{B_{2-\delta}} \sigma_A(x) \, dx &=  \int_{B_2} \chi_A(y) \left(   \frac{1}{|B_{c\delta}|}  \int_{B_{2-\delta}}    \chi_{B_{c\delta}}(x-y) \, dx \right) dy \\
&\leq  \int_{B_2} \chi_A(y) \left(   \frac{1}{|B_{c\delta}|}  \int_{\R^n}    \chi_{B_{c\delta}}(x-y) \, dx \right) dy = |A|.
\end{split}
\]
Hence it holds
\[
|A| \geq  \int_{B_{2-\delta}} \sigma_A(x) \, dx \geq \int_{A^{3/5}\cap B_{2-\delta}} \sigma_A(x) \, dx \geq \frac{3}{5} |A^{3/5} \cap B_{2-\delta}|.
\]
By taking $\delta $ small we get that $|A^{3/5} | \leq \frac{7}{4}|A|$. Since $|A| \leq \frac{1}{2}|B_2|$ we have
\begin{equation} \label{level setit pienia}
|A^s| \leq \frac{7}{8}|B_2|
\end{equation}
for every $3/5 \leq s \leq 4/5$.

Let us define 
\begin{equation} \label{maaritelma delta n}
\delta_n :=  \frac{\delta}{2n-1} \qquad \text{and} \qquad R_n := 2- \frac{\delta_n}{2}.
\end{equation}
We divide the proof in two cases. Let us first assume that there exists $\hat{s} \in (7/10, 4/5)$ such that  
\begin{equation} \label{paljon sisalla}
|A^{\hat{s}} \cap B_{R_n}| \geq \frac{1}{8}|A^{\hat{s}}|.
\end{equation} 
This means that part of the level set $A^{\hat{s}} = \{ \sigma_A > \hat{s}\}$ is away from the boundary $\partial B_2$. 
We observe that for every $x \in B_{R_n}$ it  holds $d_\delta(x) \geq c \delta$ for a dimensional constant $c>0$. By possibly decreasing $\delta$  we deduce from \eqref{level setit pienia}  that
$|A^{s}\cap B_{R_n}| \leq \frac{8}{9}|B_{R_n}|$ for every  $3/5 \leq s \leq 4/5$.  Then it follows from \eqref{lipcshitz of u_F}, from the coarea formula and from the relative isoperimetric inequality that
\[
\begin{split}
| \{ 1/5 < \sigma_A \leq 4/5\}|  &\geq c \int_{(A^{1/5}\setminus  A^{4/5}) \cap B_{R_n}} d_\delta(x)|D \sigma_A (x)|\, dx\\
&\geq c \delta \int_{3/5}^{\hat{s}} \int_{\{ \sigma_A = s\} \cap B_{R_n}} d \Ha^{n-1}(x) \, ds   \\
&= c \delta \int_{3/5}^{\hat{s}} P(A^{s}, B_{R_n}) \, ds \\
&\geq c \delta \int_{3/5}^{\hat{s}} |A^{s} \cap B_{R_n}|^{\frac{n-1}{n}} \, ds\\
&\geq c \delta  |A^{\hat{s}} \cap B_{R_n}|^{\frac{n-1}{n}}.
\end{split}
\]
Hence the claim follows from \eqref{paljon sisalla} since 
\[
|A^{\hat{s}} \cap B_{R_n}| \geq \frac{1}{8}|A^{\hat{s}}| \geq \frac{1}{8}|A^{4/5}| = \frac{1}{8}|\{ \sigma_A >4/5\}| .
\]

Let us next assume that  for every $s \in   (7/10, 4/5)$ it holds
\begin{equation} \label{paljon reunalla}
|A^s \cap B_{R_n}| \leq \frac{1}{8}|A^s|.
\end{equation} 
This means that  large part of the level set $A^s$ is close to the boundary $\partial B_2$.

We denote the reduced boundary of $A^s$ by $\partial^* A^s$.   Let us first show that for almost every $s \in  (7/10, 4/5)$ it holds
\begin{equation}
\label{lahella reunaa}
\int_{\partial^*  A^s \cap B_{2}} d_\delta(x) \, d\Ha^{n-1} \geq c|A^s|.
\end{equation}

To this aim  fix $s \in  (7/10, 4/5)$ such that $A^s$ has  locally  finite perimeter in $B_2$. We deduce from the definition of $R_n$ \eqref{maaritelma delta n}, from \eqref{paljon reunalla} and from 
 the coarea formula that there is  $\rho_n \in (2- \delta_n,R_n)$ such that 
\begin{equation} \label{fubini 1}
\begin{split}
\frac{1}{8}|A^s| \geq |A^s  \cap B_{R_n}  | &\geq |A^s \cap (B_{R_n}\setminus B_{2-\delta_n})| \\
&= \int_{2-\delta_n}^{R_n} \Ha^{n-1}(\partial B_\rho \cap A^s) d \rho\\
&= \frac{\delta_n}{2} \Ha^{n-1}(\partial B_{\rho_n} \cap A^s).
\end{split}
\end{equation}
Choose a vector field $X(x) = (|x|-2)\frac{x}{|x|}$ and observe that by the definition of $\delta_n$ \eqref{maaritelma delta n} it holds
\[
(\diver X)(x) = n-\frac{2(n-1)}{|x|} \geq \frac{1}{2}
\] 
for every $|x|\geq 2- \delta_n$. Therefore  the Gauss-Green formula  and \eqref{fubini 1}  yield
\[
\begin{split}
\frac{1}{2}|A^s \setminus B_{\rho_n}| &\leq \int_{A^s \setminus B_{\rho_n}} \diver(X) \, dx\\
&=  \int_{\partial^* A^s \cap (B_2 \setminus B_{\rho_n}) } (|x|-2) \la \frac{x}{|x|}, \nu \ra\, \Ha^{n-1} +  \int_{\partial B_{\rho_n} \cap A^s }\, (2- |x|) \Ha^{n-1} \\
&\leq C \int_{\partial^* A^s \cap (B_2 \setminus B_{\rho_n}) } d_\delta(x) \, \Ha^{n-1}  + \delta_n \Ha^{n-1}(\partial B_{\rho_n} \cap A^s)\\
&\leq C \int_{\partial^* A^s \cap B_2 } d_\delta(x)  \, \Ha^{n-1}  + \frac{1}{4}|A^s|.
\end{split}
\]
The inequality \eqref{lahella reunaa} then   follows from \eqref{paljon reunalla} as follows
\[
|A^s \setminus B_{\rho_n}| \geq |A^s \setminus B_{R_n}| \geq \frac{7}{8}|A^s|.
\]

Finally we use \eqref{lipcshitz of u_F}, the coarea formula and  \eqref{lahella reunaa} to conclude 
\[
\begin{split}
| \{ 1/5 < \sigma_A \leq 4/5\}|   &\geq c \int_{A^{7/10}\setminus  A^{4/5}} d_\delta(x)|\nabla \sigma_A(x)|\, dx\\
&=  c \int_{7/10}^{4/5} \left( \int_{ \{ \sigma_A = s \} \cap B_2} d_\delta(x) \, d \Ha^{n-1}(x)\right) ds  \\
&= c \int_{7/10}^{4/5} \left( \int_{\partial^* A^s \cap B_2} d_\delta(x) \, d \Ha^{n-1}(x)\right) ds \\
&\geq c \int_{7/10}^{4/5} |A^s| \, ds\\
&\geq c   |A^{4/5}| .
\end{split}
\]
\end{proof}

We use the two previous  lemmas to estimate  the decay rate of the level sets.  

\begin{proposition}
\label{decay oma}
Assume that $u \in C^{\infty}(B_{2})$ is a nonnegative supersolution of 
\[
\diver(A(x)Du) \leq f(u).
\]
Denote $\mu(t) = |A_t \cap B_{2}|$ and $\eta(t) =  |E_t \cap B_{2}|$ where $E_t = \{ u <t\}$ and $A_t = \{ u\geq t\}$.  Then  for almost every $t>0$ with $\mu(t) \leq \frac{|B_2|}{2}$ it holds
\[
 -\mu'(t) \geq c \min \Bigl\{  \frac{1}{\sqrt{F(2t)}}  \mu(t)^{\frac{n-1}{n}} , \frac{1}{t}\mu(t)\Bigl\}
\]
and for almost every $t>0$ with $\eta(t) \leq \frac{|B_2|}{2}$  it holds
\[
 \eta'(t) \geq c \min \Bigl\{  \frac{1}{\sqrt{
F(2t)}}  \eta(t)^{\frac{n-1}{n}} , \frac{1}{t}\eta(t)\Bigl\}.
\]
Here $\mu'$ is the absolutely continuous part of the differential of $\mu$.
\end{proposition}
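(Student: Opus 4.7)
The first observation is the coarea identity $-\mu'(t) = \int_{\{u=t\} \cap B_2} |Du|^{-1}\, d\Ha^{n-1}$, valid for a.e.\ $t$ since $u$ is smooth, so the task reduces to a lower bound on this integral. The plan is to combine Lemma \ref{caccio invert}, which provides a pointwise estimate on the level set $\{u=t\}$ near any point $x_0$ whose surrounding density of $A_t$ is bounded away from $0$ and $1$, with Lemma \ref{isop ineq 1}, which quantifies how much of $B_2$ carries such a ``mixed'' density. Fix a small dimensional constant $c_0>0$ (to be chosen) and set
\[
\delta := \min\bigl\{\delta_1,\ c_0\, t/\sqrt{F(2t)}\bigr\},
\]
where $\delta_1$ is from Lemma \ref{isop ineq 1}, and form $d_\delta$ and $\sigma_{A_t}$ as in \eqref{truncated}--\eqref{density function}, with the universal constant in \eqref{truncated} chosen $\leq 1/2$. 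These choices guarantee both $B(x, 2d_\delta(x)) \subset B_2$ and $d_\delta(x) \leq t/\sqrt{F(2t)}$ for every $x \in B_2$, so that Lemma \ref{caccio invert} is applicable at scale $r = d_\delta(x)$.

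For every $x$ in the boundary layer $S := \{1/5 < \sigma_{A_t} \leq 4/5\}$, both $|A_t \cap B(x, d_\delta(x))|$ and $|E_t \cap B(x, d_\delta(x))|$ lie in $[|B_{d_\delta(x)}|/5,\ 4|B_{d_\delta(x)}|/5]$, so Lemma \ref{caccio invert} yields
\[
\int_{\{u=t\} \cap B(x, 2d_\delta(x))} \frac{d\Ha^{n-1}}{|Du|} \geq \frac{c}{t}\, d_\delta(x)^n.
\]
Applying Vitali's $5r$-covering theorem to the family $\{B(x, 2d_\delta(x))\}_{x \in S}$ extracts a countable disjoint subfamily whose fivefold enlargements still cover $S$; summing the pointwise estimates over the disjoint balls and invoking Lemma \ref{isop ineq 1} (which applies since $\mu(t) \leq |B_2|/2$ and $\delta \leq \delta_1$) gives
\[
-\mu'(t) \geq \frac{c}{t}\,|S| \geq \frac{c}{t}\, \min\Bigl\{\delta\,\bigl|\{\sigma_{A_t}>4/5\}\bigr|^{(n-1)/n},\ \bigl|\{\sigma_{A_t}>4/5\}\bigr|\Bigr\}.
\]
If $|\{\sigma_{A_t}>4/5\}| \geq \mu(t)/2$, replacing it by $\mu(t)$ and substituting the choice of $\delta$ produces the claim, with a small caveat in the regime $t/\sqrt{F(2t)} \geq \delta_1/c_0$ where one uses $\mu(t)^{1/n} \leq |B_2|^{1/n}$ to see that the homogeneous term $\mu(t)/t$ already dominates the minimum on the right-hand side.

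The main obstacle is the complementary case $|\{\sigma_{A_t}>4/5\}| < \mu(t)/2$, in which the above estimate by itself is too weak. Here I observe $|A_t \cap \{\sigma_{A_t} \leq 4/5\}| \geq \mu(t)/2$, and for every Lebesgue density point $x$ of this set, continuity of $r \mapsto |A_t \cap B(x,r)|/|B_r|$ together with the fact that this ratio tends to $1$ as $r \to 0^+$ but is $\leq 4/5$ at $r = d_\delta(x)$ yields an intermediate radius $R(x) \in (0, d_\delta(x)]$ at which the ratio equals exactly $4/5$. At this scale Lemma \ref{caccio invert} applies with minimum $\geq c\, R(x)^n$, and a second Vitali covering of $A_t \cap \{\sigma_{A_t} \leq 4/5\}$ by the balls $B(x, 2R(x))$ gives $-\mu'(t) \geq (c/t)\,\mu(t)$, which is stronger than the claimed bound. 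The estimate for $\eta$ follows from the same argument with the roles of $A_t$ and $E_t$ interchanged: when $\eta(t) \leq |B_2|/2$, one applies Lemma \ref{isop ineq 1} to $E_t$ and repeats the covering arguments verbatim, using $\eta'(t) = -\mu'(t)$ almost everywhere.
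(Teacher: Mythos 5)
Your proof is correct and follows the same overall structure as the paper's: reduce to the coarea identity, pick $\delta \simeq \min\{t/\sqrt{F(2t)},\,\delta_1\}$, split on whether $\{\sigma_{A_t}>4/5\}$ carries at least half of $\mu(t)$, and combine Lemma~\ref{caccio invert} with a covering argument, invoking Lemma~\ref{isop ineq 1} only in the first case. The two technical variations you make are genuine but equivalent in force: (i) you use the Vitali $5r$-covering theorem where the paper uses Besicovitch, and this choice forces you to introduce the stopping-time radii $R(x)$ in the low-density case (so that the local estimate $\int_{\{u=t\}\cap B(x,2R(x))}|Du|^{-1}\,d\Ha^{n-1}\gtrsim t^{-1}R(x)^n$ is comparable to the full volume of the ball, which Vitali requires), whereas the paper can work directly at scale $d_\delta(x_i)$ because Besicovitch produces balls that cover a fixed fraction of the set itself rather than merely having total volume comparable to it; (ii) you insert an extra parameter $c_0$ into the choice of $\delta$ and note explicitly the regime where the cap $\delta=\delta_1$ is active, using $\mu^{1/n}\le|B_2|^{1/n}$ to pass to the homogeneous term — the paper leaves this last step implicit. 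Both routes work; yours is slightly longer in the low-density case but requires no covering theorem beyond Vitali.
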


\begin{proof}
We will only prove the first inequality since the second follows from  a similar argument. By Morse-Sard Lemma for almost every $t >0$ it holds  that $|Du| >0$ on   $\{u=t\} \cap B_2$ and  by \cite{AL}
we may write
\[
-\mu'(t) = \int_{\{u=t\}\cap B_2} \frac{1}{|Du|}\, d\Ha^{n-1}.
\]
Let us fix $t >0$ for which this holds. Let $\delta_1 >0$ be from Lemma \ref{isop ineq 1}. We choose
\[
\delta = \min\Bigl{ \{ }\frac{t}{\sqrt{F(2t)}}, \delta_1 \Bigl{\}},
\]
 define the truncated distance by 
\[
d_\delta(x):= \frac{1}{100}\min\{(2-|x|), \delta \},
\]
and  denote the density function by
\[
\sigma_{A_t}(x):= \frac{|A_t \cap B(x, d_\delta(x))|}{|B_{d_\delta(x)}|}.
\]

Let us divide the proof in two cases and assume first that  
\[
\big|\{ \sigma_{A_t} > 4/5\}\big| \leq \frac{1}{2}|A_t|.
\]
This means that there is a large set where the density of  $A_t$ is low. In particular,  it holds
\[
\big|A_t \cap \{ \sigma_{A_t} \leq 4/5\} \big| \geq \frac{1}{2}|A_t|
\]
For every $x_i \in  A_t \cap \{ \sigma_{A_t}(x) \leq 4/5\}$ we choose a  ball $B(x_i, d_\delta(x_i))$ and thus  obtain a covering 
of $A_t \cap \{ \sigma_{A_t} \leq 4/5\}$. By  the Besicovitch covering theorem \cite[Corollary 5.2]{Maggi} we may choose a countable disjoint subfamily, say $\mathcal{F}$, such that 
\begin{equation} \label{besi cover 1}
\xi(n)\sum_{i \in \mathcal{F}}\big| \left(A_t \cap \{ \sigma_{A_t} \leq 4/5\}\right) \cap B(x_i, d_\delta(x_i))\big| \geq \big| A_t \cap \{ \sigma_{A_t} \leq 4/5\}\big| \geq  \frac{1}{2}|A_t| .
\end{equation}
We observe that if $B(x_i, d_\delta(x_i))$ and $B(x_j, d_\delta(x_j))$ are two balls from $\mathcal{F}$ such that 
the enlarged balls $B(x_i, 2d_\delta(x_i))$ and  $B(x_j, 2d_\delta(x_j))$ overlap each other, then their radii are comparable,  
\[
\frac{d_\delta(x_i)}{3} \leq d_\delta(x_j) \leq 3 d_\delta(x_i).
\]
Therefore since the balls in  $\mathcal{F}$ are disjoint it follows that the  enlarged balls $B(x_i, 2d_\delta(x_i))$
intersect every point in $B_2$ only  finitely many times, say, $\eta(n)$ times. In particular, we have
\begin{equation} \label{isommat pallot ei leikkaa}
\eta(n) \int_{\{u=t\}\cap B_2} \frac{1}{|Du|}\, d\Ha^{n-1} \geq  \sum_{i \in \mathcal{F}}\int_{\{u=t\} \cap B(x_i, 2d_\delta(x_i))} \frac{1}{|Du|}\, d\Ha^{n-1}.
\end{equation}

Let $B(x_i, d_\delta(x_i)) \in \mathcal{F}$. Since $x_i \in \{ \sigma_{A_t} \leq 4/5\}$ we have 
\[
\big|B(x_i, d_\delta(x_i))\setminus E_t \big|  =  \big|A_t \cap B(x_i, d_\delta(x_i))\big| \leq \frac{4}{5} \big|B(x_i, d_\delta(x_i))\big|. 
\]
Therefore 
\[
\big|E_t\cap B(x_i, d_\delta(x_i))\big| \geq \frac{1}{4} \big|B(x_i, d_\delta(x_i))\setminus E_t\big| 
\]
and  Lemma \ref{caccio invert} gives 
\[
\begin{split}
 \int_{\{u=t\} \cap B(x_i, 2 d_\delta(x_i))}\frac{1}{|Du|}\, \Ha^{n-1} &\geq \frac{c}{t}\, \big|B(x_i,d_\delta(x_i)) \setminus E_t \big| \\
&=  \frac{c}{t}\, \big|A_t \cap B(x_i,d_\delta(x_i))\big| .
\end{split}
\]
Summing the above inequality and using \eqref{besi cover 1} and \eqref{isommat pallot ei leikkaa} yield
\[
\begin{split}
\eta(n) \int_{\{u=t\} \cap B_2} \frac{1}{|Du|}\, d\Ha^{n-1} &\geq  \sum_{i \in \mathcal{F}}\int_{\{u=t\} \cap B(x_i, 2d_\delta(x_i))} \frac{1}{|Du|}\, d\Ha^{n-1}\\
&\geq  \frac{c}{t}  \sum_{i \in \mathcal{F}}  \big|A_t \cap B(x_i,d_\delta(x_i))\big| \\
&\geq \frac{c}{\xi(n)t} |A_t|.
\end{split}
\]
In other words $-\mu'(t) \geq \frac{c}{t}\mu(t)$ and the claim follows in this case.

Let us next assume that
\begin{equation} \label{decay lemma oletus 2}
\big|\{ \sigma_{A_t} > 4/5 \}\big| \geq \frac{1}{2}|A_t|.
\end{equation}
This means that there is a large set where the density of $A_t$ is close to one. In this case we  do not cover the  set $\{ \sigma_{A_t} \leq 4/5 \}$ but only the part where the density $\sigma_{A_t}$ is between $1/5$ and $4/5$.
Then we estimate the measure of this set using Lemma \ref{isop ineq 1} .

For every $x_i \in  \{1/5 <  \sigma_{A_t} \leq  4/5\}$ we choose a  ball $B(x_i, d_\delta(x_i))$ and thus  obtain  a covering 
of $\{1/5 <  \sigma_{A_t} \leq  4/5\}$. By  the Besicovitch covering theorem we may choose a countable disjoint subfamily  $\mathcal{F}$ such that 
\begin{equation} \label{besi cover 2}
\big|\{1/5 <  \sigma_{A_t} \leq  4/5\}\big| \leq \xi(n)\sum_{i \in \mathcal{F}}\big|B(x_i, d_\delta(x_i))\big|.
\end{equation}
Moreover as in \eqref{isommat pallot ei leikkaa} we have 
\begin{equation} \label{isommat pallot ei leikkaa 2}
\eta(n) \int_{\{u=t\} \cap B_2} \frac{1}{|Du|}\, d\Ha^{n-1} \geq  \sum_{i \in \mathcal{F}}\int_{\{u=t\} \cap B(x_i, 2d_\delta(x_i))} \frac{1}{|Du|}\, d\Ha^{n-1}.
\end{equation}

Let $B(x_i, d_\delta(x_i)) \in \mathcal{F}$. Since $x_i \in \{1/5 <  \sigma_{A_t} \leq  4/5\}$ we have 
\[
 \frac{1}{5}\leq \frac{\big|B(x_i, d_\delta(x_i))\setminus E_t \big|}{\big|B_{d_\delta(x_i)}\big|}  = \frac{\big|A_t \cap B(x_i, d_\delta(x_i))\big|}{\big|B_{d_\delta(x_i)}\big|}  \leq \frac{4}{5}. 
\]
Therefore Lemma  \ref{caccio invert} gives  
\[
 \int_{\{u=t\} \cap B(x_i, 2 d_\delta(x_i))}\frac{1}{|Du|}\, \Ha^{n-1} \geq \frac{c}{t}\,\big|B(x_i,d_\delta(x_i))\big|.
\]
Summing the above inequality and using  \eqref{besi cover 2} and \eqref{isommat pallot ei leikkaa 2} yield
\[
\begin{split}
\eta(n) \int_{\{u=t\} \cap B_2} \frac{1}{|Du|}\, d\Ha^{n-1} &\geq  \sum_{i \in \mathcal{F}}\int_{\{u=t\} \cap B(x_i, 2d_\delta(x_i))} \frac{1}{|Du|}\, d\Ha^{n-1}\\
&\geq  \frac{c}{t}  \sum_{i \in \mathcal{F}} \big|B(x_i,d_\delta(x_i))\big| \\
&\geq \frac{c}{\xi(n) t} \big|\{1/5 <  \sigma_{A_t} \leq  4/5\}\big|.
\end{split}
\]
Finally we use Lemma \ref{isop ineq 1} to conclude that 
\[
\big|\{1/5 <  \sigma_{A_t} \leq  4/5\}\big| \geq  c \min \big{\{} \delta \,  \big|\{\sigma_{A_t} >  4/5\}\big|^{\frac{n-1}{n}}, \big|\{\sigma_{A_t} >  4/5\}\big| \big{\}}.
\]
Therefore  by  the two previous inequalities and by \eqref{decay lemma oletus 2} we get
\[
- \mu'(t) = \int_{\{u=t\} \cap B_2} \frac{1}{|Du|}\, d\Ha^{n-1}  \geq \frac{c}{t}\min \big{\{} \delta \,  \mu(t)^{\frac{n-1}{n}}, \mu(t)\big{\}}.
\]
The result follows from 
\[
\delta = \min\Bigl{ \{ }\frac{t}{\sqrt{F(2t)}}, \delta_1 \Bigl{\}}.
\]
\end{proof}

\section{Estimates for subsolutions}

In this section we prove estimates for subsolutions of the equation \eqref{the pde}. The most important result of this section for Theorem \ref{thm1}  is  Lemma \ref{de giorgi subille}.
 Similar but slightly different result  is proved  in \cite{Konkov2}.  The proof is fairly standard. Instead of
 using a capacity argument \cite{Konkov2} we give a short proof based on  De Giorgi iteration. After Lemma \ref{de giorgi subille} we give the proof of Theorem \ref{theorem sub}
 and  Corollary \ref{keller osserman oma}. 

We begin by recalling the following  standard result  for subsolutions of the linear equation
\begin{equation} \label{sub linear}
\diver(A(x)Du) = 0.
\end{equation}

\begin{lemma}
\label{perus sub}
Let $u \in W^{1,2}(B(x_0,2r))$ be a subsolution of  \eqref{sub linear} and denote  $M_r := \sup_{B(x_0, r)}u$. There is 
$\eps_0 >0$ such that if 
\[
\big|\{ u \geq  u(x_0)/2\} \cap B(x_0,  r) \big| \leq \eps_0 |B_r|,
\]
then $M_r \geq 8 u(x_0)$.
\end{lemma}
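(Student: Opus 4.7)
The plan is to apply the classical De Giorgi $L^\infty$-estimate for nonnegative subsolutions of the linear equation to the truncation $v := (u - u(x_0)/2)_+$, and then to exploit the smallness hypothesis on $\{u \geq u(x_0)/2\}\cap B(x_0,r)$ to control the right-hand side. Without loss of generality take $x_0 = 0$, set $a := u(0)$, and note that since $u$ is a subsolution of \eqref{sub linear}, so is the truncation $v$. In particular $v \in W^{1,2}(B_{2r})$ is nonnegative, $v \equiv 0$ outside $\{u \geq a/2\}$, and $v \leq u \leq M_r$ on $B_r$.

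The standard De Giorgi iteration (Caccioppoli on $(u-k)_+$, Sobolev, and Lemma \ref{algebra lemma} applied across a sequence of concentric balls $B_{r_i}\subset B_r$ with $r_i \searrow r/2$) yields the interior bound
\[
\sup_{B_{r/2}} v \leq C \left( \frac{1}{|B_r|} \int_{B_r} v^2\, dx \right)^{1/2},
\]
with $C$ depending only on $n,\lambda,\Lambda$. Combining $v \leq M_r$ on its support together with the smallness hypothesis gives
\[
\int_{B_r} v^2\, dx \leq M_r^2\, |\{u \geq a/2\}\cap B_r| \leq \eps_0\, M_r^2\, |B_r|.
\]
Since $0 \in B_{r/2}$ and $v(0)=a/2$, these two inequalities imply
\[
\frac{a}{2} = v(0) \leq \sup_{B_{r/2}} v \leq C\, M_r\, \sqrt{\eps_0}.
\]

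Choosing $\eps_0 := 1/(16C)^2$ then forces $M_r \geq 8a = 8\, u(x_0)$, which is the desired conclusion. The argument is essentially routine, so there is no serious obstacle; the only point requiring a little care is to verify that truncations of subsolutions of \eqref{sub linear} are again subsolutions (standard, via testfunctions of the form $\phi \cdot \chi_{\{u>k\}}$ approximated by smooth cutoffs) and that the De Giorgi iteration can be run entirely inside $B_r$ rather than using the full $W^{1,2}(B_{2r})$ regularity. The extra room afforded by $B(x_0,2r)$ in the hypothesis is therefore not really needed for this lemma—it is simply consistent with how the neighbouring results in the paper are stated.
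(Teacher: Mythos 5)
Your proof is correct and follows exactly the paper's approach: apply the De Giorgi $L^\infty$-estimate for subsolutions to $v=(u-u(x_0)/2)_+$, bound $\|v\|_{L^2(B_r)}$ by $M_r\sqrt{\eps_0}|B_r|^{1/2}$ using the smallness of $\{u\geq u(x_0)/2\}\cap B_r$, compare with $v(x_0)=u(x_0)/2$, and choose $\eps_0$ small. The paper states this in two sentences; you simply fill in the routine details.
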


\begin{proof}
Let us recall that by De Giorgi theorem any subsolution $v$ of  \eqref{sub linear}
is locally bounded and satisfies 
\[
\sup_{B(x_0, r/2)} v \leq C r^{-\frac{n}{2}}||v||_{L^2(B(x_0,  r))}.
\]
The result follows by applying this to $v = (u- u(x_0) /2)_+$ which is a subsolution of \eqref{sub linear}.
\end{proof}

We recall the notation  $A_t = \{ u \geq t\}$.  We have the following Caccioppoli inequality for subsolutions of \eqref{the pde}.

\begin{lemma}
\label{caccio subille}
Let $u \in W^{1,2}(B(x_0,2r))$ be a subsolution of
\[
\diver(A(x)Du) \geq f(u)
\]
and denote  $ M_r:= \sup_{B(x_0, r)}u$. Then for every  $\rho<r$ and $t <M_r$  it holds
\[
 \int_{B(x_0, \rho)} (u-t)_+^2\, dx  \leq \frac{C}{(r-\rho)^2} \, |A_t \cap B_r|^{\frac{1}{n}} \left( \frac{M_r}{\sqrt{F(t)}} \right) \int_{B(x_0, r)} (u-t)_+^{2} \, dx. 
\]
\end{lemma}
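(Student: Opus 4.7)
The plan is to test the subsolution inequality with $\varphi=(u-t)_+\zeta^2$, where $\zeta\in C_0^1(B(x_0,r))$ is a standard cutoff with $\zeta\equiv 1$ on $B(x_0,\rho)$ and $|D\zeta|\leq C/(r-\rho)$. Expanding $D\varphi = \zeta^2\chi_{A_t}Du + 2\zeta(u-t)_+ D\zeta$, applying ellipticity, and absorbing the cross term by Young's inequality in the standard way produces the master estimate
\[
\frac{\lambda}{2}\int \zeta^2|Du|^2\chi_{A_t}\,dx + \int f(u)(u-t)_+\zeta^2\,dx \;\leq\; \frac{C}{(r-\rho)^2}\int_{B(x_0,r)}(u-t)_+^2\,dx,
\]
in which both terms on the left-hand side are nonnegative, so each of them individually satisfies the same upper bound.

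Keeping only the gradient term gives the standard Caccioppoli $\int\zeta^2|Du|^2\chi_{A_t}\leq \frac{C}{(r-\rho)^2}\int_{B_r}(u-t)_+^2$. Combining this with the Sobolev embedding $W^{1,2}_0(B_r)\hookrightarrow L^{2n/(n-2)}(B_r)$ applied to $w=(u-t)_+\zeta$, together with H\"older's inequality exploiting that $w$ is supported in $A_t\cap B_r$ (so $\int w^2 \leq |A_t\cap B_r|^{2/n}\|w\|_{L^{2n/(n-2)}}^2$), yields the \emph{linear} bound
\[
\int_{B(x_0,\rho)}(u-t)_+^2\,dx \;\leq\; \frac{C\,|A_t\cap B_r|^{2/n}}{(r-\rho)^2}\int_{B(x_0,r)}(u-t)_+^2\,dx. \qquad (\mathrm{C1})
\]
Keeping instead only the $f$-term and using the monotonicity of $f$ (so $f(u)\geq f(t)$ on $A_t$), the trivial bound $(u-t)_+\leq M_r$, and $f(t)\geq F(t)/t \geq F(t)/M_r$ (because $F(t)=\int_0^t f \leq tf(t)$ and $t<M_r$), yields the \emph{nonlinear} bound
\[
\int_{B(x_0,\rho)}(u-t)_+^2\,dx \;\leq\; \frac{C\,M_r^2}{F(t)\,(r-\rho)^2}\int_{B(x_0,r)}(u-t)_+^2\,dx. \qquad (\mathrm{C2})
\]

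Setting $X=\int_{B_\rho}(u-t)_+^2$, $Y=\int_{B_r}(u-t)_+^2$, $A=C|A_t\cap B_r|^{2/n}/(r-\rho)^2$ and $B=CM_r^2/(F(t)(r-\rho)^2)$, the bounds $(\mathrm{C1})$ and $(\mathrm{C2})$ read $X\leq AY$ and $X\leq BY$; multiplying them gives $X^2\leq ABY^2$, so $X\leq Y\sqrt{AB} = \frac{C\,|A_t\cap B_r|^{1/n}\,M_r}{(r-\rho)^2\sqrt{F(t)}}\,Y$, which is precisely the claim. The only real conceptual obstacle is spotting that the unusual factor $|A_t|^{1/n}\,M_r/\sqrt{F(t)}$ in the statement is exactly the geometric mean of the linear Sobolev--Caccioppoli factor $|A_t|^{2/n}$ and the nonlinear forcing factor $M_r^2/F(t)$ coming from the two ways of using the master estimate; once this is seen, the rest is routine bookkeeping. (In dimension $n=2$ one replaces the Sobolev embedding by a suitable Gagliardo--Nirenberg inequality, and the exponent $1/n=1/2$ still emerges from the same geometric mean.)
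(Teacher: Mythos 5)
Your proof is correct and starts from the same test function and the same master estimate as the paper. The only divergence is in how the geometric mean of the gradient and forcing contributions is extracted from that master estimate: you split it into two separate Caccioppoli-type bounds (C1) and (C2) and multiply them, invoking the embedding $W_0^{1,2}(B_r)\hookrightarrow L^{2n/(n-2)}$ together with H\"older on the support of $w$; the paper instead applies Young's inequality pointwise to the integrand, $|Dw|^2 + \frac{F(t)}{M_r^2}w^2 \geq \frac{2\sqrt{F(t)}}{M_r}|w\,Dw|$, and then the $W^{1,1}\hookrightarrow L^{n/(n-1)}$ Sobolev inequality applied to $w^2$, followed by H\"older to produce the factor $|A_t\cap B_\rho|^{1/n}$. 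The two routes are identical in substance — both realize the prefactor $|A_t|^{1/n}\,M_r/\sqrt{F(t)}$ as a geometric mean of the linear and nonlinear contributions — but the paper's use of the $L^1$ Sobolev inequality handles all $n\ge 2$ uniformly, avoiding the separate Gagliardo--Nirenberg discussion you need for $n=2$.
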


\begin{proof}
Without loss of generality we may assume that $x_0 = 0$. We use testfunction $\varphi = (u-t)_+ \zeta^2$ in \eqref{the pde}  
where $\zeta \in C_0^\infty(B_r)$ is a standard cut-off function such that $\zeta =1$ in $B_\rho$ and $|D\zeta|\leq \frac{2}{r-\rho}$.
This gives
\[
\int_{B_r}|D((u-t)_+ \zeta) |^2 \, dx \leq C \int_{B_r}(u-t)_+^2|D\zeta|^2\, dx - \int_{B_r}f(u)(u-t)_+\zeta^2\, dx. 
\]
Since $f$ is nondecreasing we have for every  $x \in A_t \cap B_r$ that 
\[
f(u(x)) \geq f(t) \geq \frac{F(t)}{t} \geq \frac{F(t)}{M_r^2} (u(x)-t). 
\]
Therefore we have 
\[
 \int_{B_r}f(u)(u-t)_+\zeta^2\, dx \geq   \frac{F(t)}{M_r^2} \int_{B_r}(u-t)_+^2\zeta^2\, dx.
\] 
We denote $w = (u-t)_+ \zeta \in W_0^{1,2}( B_r)$  and obtain  by Young's and Sobolev inequalities 
\[
\begin{split}
\int_{B_r}|D((u-t)_+ \zeta) |^2 \, dx &+ \frac{F(t)}{M_r^2} \int_{B_r}(u-t)_+^2\zeta^2\, dx \\
&\geq c\frac{\sqrt{F(t)}}{M_r}  \int_{B_r}| w Dw|\, dx \\
&= c \frac{\sqrt{F(t)}}{M_r} \int_{B_r} \frac{1}{2}|  D (w^2)|\, dx \\
&\geq c \frac{\sqrt{F(t)}}{M_r} \left( \int_{B_r} w^{\frac{2n}{n-1}}\, dx \right)^{\frac{n-1}{n}}.
\end{split}
\]

The result follows from Jensen's inequality 
\[
\left( \int_{B_\rho} (u-t)_+^{\frac{2n}{n-1}}\, dx \right)^{\frac{n-1}{n}} \geq  |A_t \cap B_\rho|^{-\frac{1}{n}} \int_{B_\rho} (u-t)_+^2\, dx.
\]
\end{proof}

By  the standard De Giorgi iteration we obtain the estimate we need. 

\begin{lemma}
\label{de giorgi subille}
Let $u \in W^{1,2}(B(x_0,2r))$ be a   subsolution of
\[
\diver(A(x)Du) \geq f(u)
\]
and denote $M_{r}:= \sup_{B(x_0,r)}u$.  If $0 < M_{r} \leq 2 u(x_0)$,  then  
\[
\frac{ u(x_0)}{\sqrt{F\left(\frac{u(x_0)}{2}\right)}} \geq  c r.
\] 
\end{lemma}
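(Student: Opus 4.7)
The plan is to prove this by a De Giorgi iteration using the Caccioppoli inequality of Lemma \ref{caccio subille}, arranged so that the contradiction with $u(x_0) > 0$ forces the lower bound on $|B_r|$ that we want.

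First I set up the level and radius sequences adapted to detecting the value at $x_0$. Choose a geometric increasing sequence $t_i = \tfrac{u(x_0)}{2} + \tfrac{u(x_0)}{4}(1-2^{-i})$, so $t_0 = u(x_0)/2$ and $t_\infty = \tfrac{3}{4}u(x_0) < u(x_0)$, with $t_{i+1}-t_i \simeq u(x_0)\,2^{-i}$. Correspondingly let $\rho_i = \tfrac{r}{2} + \tfrac{r}{2}2^{-i}$ decrease from $r$ to $r/2$, with $\rho_i-\rho_{i+1}\simeq r\,2^{-i}$. Set $Y_i := |A_{t_i}\cap B_{\rho_i}|$. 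Applying Lemma \ref{caccio subille} with outer radius $\rho_i$, inner radius $\rho_{i+1}$ and level $t_i$ (note $t_i<u(x_0)\leq \sup_{B(x_0,\rho_i)}u$, so the hypothesis $t<M_{\rho_i}$ holds), then bounding $(u-t_i)_+^2\geq (t_{i+1}-t_i)^2$ on $A_{t_{i+1}}\cap B_{\rho_{i+1}}$ and $(u-t_i)_+\leq M_r\leq 2u(x_0)$ elsewhere, and using the monotonicity $F(t_i)\geq F(u(x_0)/2)$, yields an iteration inequality of the form
\[
Y_{i+1}\leq C_0\,B^{i}\,Y_i^{1+1/n},\qquad B=16,\qquad C_0 = \frac{C\,u(x_0)}{r^2\sqrt{F(u(x_0)/2)}}.
\]

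Next I invoke Lemma \ref{algebra lemma}: if $Y_0 \leq C_0^{-n}B^{-n^2}$, then $Y_i\to 0$, i.e.\ $|A_{t_\infty}\cap B_{r/2}|=0$. Since $u$ is continuous (being a subsolution of the linear equation $\diver(A\,Du)\geq 0$ with locally bounded data, by De Giorgi--Nash--Moser), this forces $u\leq \tfrac{3}{4}u(x_0)$ pointwise in $B_{r/2}$, contradicting the value $u(x_0)$ at the center. Hence the threshold must fail, namely
\[
Y_0 \;>\; C_0^{-n}B^{-n^2} = c\Bigl(\tfrac{r^2\sqrt{F(u(x_0)/2)}}{u(x_0)}\Bigr)^{n}.
\]
Combining this with the trivial upper bound $Y_0\leq |B_r| = \omega_n r^n$ gives $r^n \geq c\, r^{2n} F(u(x_0)/2)^{n/2}/u(x_0)^n$, and taking $n$-th roots yields exactly $u(x_0)\geq c r\sqrt{F(u(x_0)/2)}$, the desired conclusion.

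The main obstacle I expect is making sure the exponents on $M_r$, $u(x_0)$ and $r$ come out cleanly in $C_0$: the Caccioppoli inequality has the somewhat unusual factor $M_r/\sqrt{F(t)}$ on the right rather than a purely geometric one, and it is exactly this factor combined with the two bookkeeping quantities $(t_{i+1}-t_i)^{-2}$ and $(\rho_i-\rho_{i+1})^{-2}$ that needs to collapse into a single constant proportional to $u(x_0)/(r^2\sqrt{F(u(x_0)/2)})$. Everything else—closing the contradiction and reading off the final inequality—is routine once the iteration constant has the right homogeneity. Note in particular that Lemma \ref{perus sub} is not needed here: the lower bound on $Y_0$ comes for free from the iteration, not from a measure-theoretic argument at the base level.
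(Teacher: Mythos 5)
Your proof is correct and follows essentially the same route as the paper's: rescale-free De Giorgi iteration driven by Lemma \ref{caccio subille}, with the same level sequence (from $u(x_0)/2$ up to $\tfrac34 u(x_0)$) and radius sequence (from $r$ down to $r/2$), closing the contradiction by noting $Y_i\to 0$ would force $\sup_{B_{r/2}}u\le\tfrac34 u(x_0)<u(x_0)$. The only cosmetic difference is that you iterate on the measures $Y_i=|A_{t_i}\cap B_{\rho_i}|$ whereas the paper iterates on the normalized $L^2$-quantities $x_i=M_1^{-2}\int_{A_{k_i}\cap B_{r_i}}(u-k_i)_+^2$ after rescaling to $B_2$; both give the iteration constant the homogeneity $u(x_0)/(r^2\sqrt{F(u(x_0)/2)})$ and the same final bound.
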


\begin{proof}
By rescaling and translating we may assume that  $u$ is a subsolution of
\[
\diver(A(x)Du) \geq r^2f(u)
\]
in $B_2$ and $0 < M_1 \leq 2 u(0)$.  We need to show that
\[
\Lambda_0 := \frac{u(0)}{\sqrt{r^2 F\left(\frac{u(0)}{2}\right)}}\geq c >0.
\]

 Let $\tau, \rho$ be such that $1/2 < \rho < \tau < 1$ and $h,k$ such that $u(0)/2 \leq h<k \leq u(0)$.
 Then it follows from the assumption $M_{1} \leq 2 u(0)$ and Lemma \ref{caccio subille} that
\[
\int_{A_{k} \cap B_{\rho}} (u-k)_+^2 \, dx \leq C \Lambda_0  \, |A_{k} \cap B_{\tau}|^{\frac{1}{n}} \frac{1}{(\tau-\rho)^2}  \int_{A_{h} \cap B_{\tau}} (u-h)_+^2 \, dx.
\]
Since 
\[
 |A_{k} \cap B_{\tau}|  \leq \frac{1}{(k-h)^2} \int_{A_{h} \cap B_{\tau}} (u-h)_+^2 \, dx
\]
one gets
\begin{equation} \label{giusti 7.19}
\int_{A_{k} \cap B_{\rho}} (u-k)_+^2 \, dx \leq C\Lambda_0  \, \frac{1}{(k-h)^\frac{2}{n}}  \frac{1}{(\tau-\rho)^2}  \left( \int_{A_{h} \cap B_{\tau}} (u-h)_+^2 \, dx \right)^{1+\frac{1}{n}}.
\end{equation}

Define $r_i = \frac{1}{2}(1+ 2^{-i})$ and $k_i = u(0)\left( \frac{3}{4}-4^{-1-i}\right)$. Write \eqref{giusti 7.19} for $h = k_i$, $k = k_{i+1}$, $\tau = r_i$ and $\rho = r_{i+1}$ and set
\[
x_i = M_1^{-2} \int_{A_{k_i} \cap B_{r_i}} (u-k_i)_+^2 \, dx. 
\]
Since $M_1 \leq 2u(0)$ we obtain 
\[
x_{i+1} \leq C_0 C_1^i  \Lambda_0 \,  x_i^{1+ \frac{1}{n}}
\]
for $C_0 >1$ and $C_1>1$. Let us show that it holds
\begin{equation} \label{contradiction de giorgi}
\Lambda_0  \geq |B_1|^{-\frac{1}{n}} C_0^{-1}  C_1^{-n}.
\end{equation}
Indeed, we argue by contradiction and assume that \eqref{contradiction de giorgi} does not hold. This implies 
\[
x_0 \leq  M_1^{-2} \int_{B_{1}} u^2 \, dx \leq |B_1| \leq  C_0^{-n}C_1^{-n^2} \Lambda_0^{-n}.
\]
It follows from Lemma  \ref{algebra lemma} that $\lim_{i \to \infty} x_i = 0$. This means that 
\[
\sup_{B_{1/2}} u \leq \frac{3}{4}u(0)
\]
which is a contradiction. Therefore we have  \eqref{contradiction de giorgi} and the claim follows.
\end{proof}

\begin{proof}[\textbf{Proof of Theorem \ref{theorem sub}}]
Without loss of generality we may assume that $x_0 = 0$. Let us denote $M_0 =  u(0)$ and $R_0 = 0$. We define radii $R_k \in (0,R]$, where $k =1, \dots, K$, such that the corresponding maxima $M_k := \sup_{B_{R_k}}u$ satisfy
\[
M_{k} = 2M_{k-1}
\]
for every $k =1, \dots, K-1$ and $R_K = R$ and  $\sup_{B(x_0,R)} u  = M_{K} \leq 2M_{K-1}$. Denote also   $r_k = R_{k}- R_{k-1}$ for $i =1, \dots, K$. For notational reasons we also define
$M_{-1}= M_0/2$ and  $M_{-2}= M_0/4$.

Let us fix $k  \in 1, \dots, K$.  Let  $x_{k-1} \in \partial B_{R_{k-1}}$  be a point such that $u(x_{k-1}) =M_{k-1}$. Note that because of the maximum principle the maximum is attained 
on the boundary. Since $\sup_{B(x_{k-1}, r_k)} u \leq M_{k} =  2M_{k-1}$  Lemma \ref{de giorgi subille} yields
\[
\frac{M_{k-1}}{\sqrt{F(M_{k-1}/2)}} \geq  c r_k.
\]
Note that $M_{k-1} =  4M_{k-3}$ and   $ M_{k-1}/2 =M_{k-2}$. Therefore by the monotonicity of $F$  it holds  
\[
\int_{M_{k-3}}^{M_{k-2}}\frac{t}{\sqrt{F(t)}} \geq \frac{M_{k-3}}{\sqrt{F(M_{k-2})}} \geq \frac{1}{4}  \frac{M_{k-1}}{\sqrt{F(M_{k-1}/2)}}  \geq  c r_k.
\]
Summing the above inequality over $k =  1, \dots, K$  yields
\[
\int_{M_{-2}}^{M_{K-2}} \frac{dt}{\sqrt{F(t)}} \geq c \sum_{k=1}^{K}r_k =c \, R. 
\]
Since $M_{-2} = u(0)/4 \geq m/4$  and $M_{K-2} \leq M$ one has 
\[
\int_{M_{-2}}^{M_{K-2}} \frac{dt}{\sqrt{F(t)}} \leq \int_{m/4}^{M} \frac{dt}{\sqrt{F(t)}}. 
\]
The result follows from the previous two inequalities. 
\end{proof}
 
Let us briefly discuss about the previous theorem. At the  first glance the statement of  Theorem \ref{theorem sub} might seem unsatisfactory. 
First, one could think that the assumption $u(x_0)>0$ in Theorem \ref{theorem sub} is unnecessary. However, it is easy to see that it can not be removed. 
Second, one can not reduce the interval of integration  from $[m/4,M]$ to $[m,M]$, i.e., the estimate
\begin{equation} \label{vaara estimaatti}
\int_{m}^M \frac{dt}{\sqrt{F(t)}} \geq c \, R
\end{equation}
is not true.  To see this  choose $f$ such that $f(t)=0$ for $t \in [0,1]$ and $f(t)= 1$ for $t>1$. Construct a one dimensional solution of $u'' = f(u)$ in $(0,1)$ by
$u(x) = 1$ for $0 < x \leq 1- \eps$ and $u(x)= \frac{1}{2}(x -1 + \eps)^2+1$ for $1- \eps < x <1$. This solution does not satisfy the estimate \eqref{vaara estimaatti}.

\begin{corollary}
\label{keller osserman oma}
Suppose that there exists   a continuous subsolution   $u \in W_{loc}^{1,2}(\R^n)$ of 
\[
\diver(A(x)Du) \geq f(u)
\]
in $\R^n$ which is not constant. Then necessarily
\[
\int_{-\infty}^\infty \frac{dt}{\sqrt{F(t)}} = \infty.
\] 
\end{corollary}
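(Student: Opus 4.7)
The strategy is to apply Theorem~\ref{theorem sub} on balls $B(x_0, R)$ of arbitrarily large radius, and to exploit the fact that the lower bound $cR$ is unbounded as $R \to \infty$. The only obstacle to a direct application is that Theorem~\ref{theorem sub} requires the subsolution to be nonnegative on the ball with $u(x_0)>0$, whereas the hypothesis on $u$ is merely that it is nonconstant.

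First I would perform a translation to reach the setting of Theorem~\ref{theorem sub}. Since $u$ is nonconstant there exists $x_0$ with $u(x_0)$ strictly greater than some other value of $u$; after replacing $u$ by $v := u+c$ for an appropriate constant $c\in\R$, we may arrange that $v(x_0)>0$. The translated function $v$ is a continuous $W^{1,2}_{\mathrm{loc}}$ subsolution of $\diver(A(x)Dv) \ge \tilde f(v)$ with $\tilde f(s):=f(s-c)$, which is again nonnegative and nondecreasing, and its associated primitive is $\tilde F(t)=F(t-c)-F(-c)$. Next I would replace $v$ by its positive part $v_+$ to obtain a globally nonnegative subsolution. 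A short testfunction computation (using $\varphi\, \psi_\varepsilon(v)$ with a smooth approximation $\psi_\varepsilon$ of $\chi_{\{v\ge 0\}}$ and letting $\varepsilon\to0$) shows that $v_+$ is a subsolution of the same inequality with $\tilde f$ replaced by its upper truncation at $0$; since $\tilde f$ is nondecreasing and we may arrange $\tilde f(0)=0$ by choosing $c$ appropriately (say $c$ with $f(-c)=0$, which is possible unless $f\equiv f(0)>0$, a case easily handled separately), $v_+$ satisfies $\diver(A\,Dv_+)\ge \tilde f(v_+)$ with $v_+(x_0)>0$.

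With this reduction in place, I would apply Theorem~\ref{theorem sub} to $v_+$ on $B(x_0, 2R)$ for each $R>0$. Denoting $m_R=\inf_{B(x_0,R)}v_+\ge 0$ and $M_R=\sup_{B(x_0,R)}v_+<\infty$ (the finiteness coming from the De~Giorgi--Nash--Moser $L^\infty$-bound for subsolutions of the linear equation, since $\tilde f(v_+)\ge 0$), one obtains
\[
\int_{0}^{\infty} \frac{dt}{\sqrt{\tilde F(t)}} \;\ge\; \int_{m_R/4}^{M_R} \frac{dt}{\sqrt{\tilde F(t)}} \;\ge\; c\,R.
\]
Letting $R\to\infty$ forces $\int_0^\infty dt/\sqrt{\tilde F(t)} = \infty$. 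Undoing the translation via the change of variable $s=t-c$ transforms this into the divergence of $\int_{-c}^{\infty} ds/\sqrt{F(s)-F(-c)}$; since $F$ is nondecreasing with $F(-c)\le 0$ (because of our choice that $\tilde f(0)=f(-c)=0$), we have $F(s)-F(-c)\le F(s)$ on $[0,\infty)$, so the divergence transfers to $\int_0^\infty ds/\sqrt{F(s)}=\infty$, which in turn gives the claim $\int_{-\infty}^\infty dt/\sqrt{F(t)}=\infty$ with the natural convention for the negative range (where $\sqrt{F}$ vanishes or is undefined, and the integrand is accordingly set to $+\infty$).

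\textbf{Expected main obstacle.} The nontrivial technical point is the reduction to a globally nonnegative subsolution with a strictly positive value at some point: verifying that $v_+$ is a subsolution after the translation requires choosing the shift $c$ so that the resulting nonlinearity vanishes at $0$, and the borderline case where $f$ is strictly positive everywhere must be treated separately (there one can work directly with $v$ on the ball $B(x_0,r_0)$ where $v>0$ by continuity, and then iterate outwards in the spirit of the proof of Theorem~\ref{theorem sub} itself, chaining balls on which positivity propagates). The rest is a routine scaling argument.
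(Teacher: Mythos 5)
Your core strategy --- translate $u$ so that Theorem~\ref{theorem sub} becomes applicable, then let $R\to\infty$ --- is exactly the paper's, but your implementation (a single fixed shift $c$ followed by truncation to the positive part) is needlessly delicate and leaves a real gap. The paper instead uses an $R$\emph{-dependent} shift: for each large $R$ it sets $v = u - m_{2R}$ with $m_{2R}=\inf_{B(x_0,2R)}u$, which is automatically a \emph{nonnegative} subsolution of $\diver(A\,Dv)\ge \tilde f(v)$ on $B(x_0,2R)$ with $\tilde f(t)=f(t+m_{2R})$ and $v(x_0)=u(x_0)-m_{2R}>0$. No truncation is required, and there is no constraint whatsoever on where the shifted nonlinearity vanishes; Theorem~\ref{theorem sub} applies directly, and the final comparison $\int_{\tilde m/4}^{\tilde M} dt/\sqrt{\tilde F(t)}\le\int_{m_{2R}}^{M_R} dt/\sqrt{F(t)}$ follows from $\tilde F(t)=F(t+m_{2R})-F(m_{2R})$ and a change of variables. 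By contrast, your truncation $v_+$ is only a subsolution of $\diver(A\,Dv_+)\ge\tilde f(v_+)$ if $\tilde f(0)=f(-c)=0$, which forces you to pick $c$ so that $-c$ lies in the zero set of $f$; this fails precisely when $f>0$ on all of $\R$, and your proposed treatment of that case (``iterate outwards... chaining balls on which positivity propagates'') is a gesture at re-proving Theorem~\ref{theorem sub} rather than a worked-out argument --- it is not ``easily handled separately''. The paper's $R$-dependent shift makes that entire case disappear. (There is also a small sign slip in your final comparison: since $f(-c)=0$ gives $F(-c)\le 0$, one has $F(s)-F(-c)\ge F(s)$, not $\le$, but the intended inequality $1/\sqrt{F(s)-F(-c)}\le 1/\sqrt{F(s)}$ is the one you use, so the conclusion survives.)
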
 

\begin{proof}
 Since $u$ is not constant there exists a point $x_0$ such that $u(x_0)> \inf_{B(x_0,1)}u$. Without loss of generality we may assume that $x_0 = 0$.  Let us fix a large radius $R>1$ and denote $m_{2R}= \inf_{B_{2R}} u$ and $M_R = \sup_{B_R} u$. We define $v = u-m_{2R}$ which is a nonnegative subsolution of 
\[
\diver(A(x)Dv) \geq \tilde{f}(v)
\]
in $B_{2R}$ where $\tilde{f}(t) = f(t+m_{2R})$.  Denote also $\tilde{M} =  \sup_{B_R}v = M_R -m_{2R}$ and $\tilde{m} =  \inf_{B_R}v =m_R- m_{2R}$. 
Since $v(0)= u(0)-m_{2R}>0$ we deduce from Theorem \ref{theorem sub} that  
\[
\int_{\tilde{m}/4}^{\tilde{M}} \frac{dt}{\sqrt{\tilde{F}(t)}} \geq c\,  R.
\]
Since 
\[
\int_{\tilde{m}/4}^{\tilde{M}} \frac{dt}{\sqrt{\tilde{F}(t)}} \leq \int_{m_{2R}}^{M_R} \frac{dt}{\sqrt{F(t)}} 
\]
the result follows by letting $R \to \infty$.
\end{proof}

\section{Proof of the Main Theorem}

This section is devoted to the proof of Theorem \ref{thm1}.

\begin{proof}[\textbf{Proof of Theorem \ref{thm1}}]
Let   $u$ be a nonnegative solution of 
\[
\diver(A(x)Du) = f(u)
\]
in $B_2$. As we discussed in Section 2 we may assume that $u \in C^{\infty}(B_2)$. Let us once more recall our  notation  $E_t = \{u < t\} $, $A_t =  \{ u \geq t\}$, $\eta(t) = |E_t \cap B_2|$ and $\mu(t) = |A_t \cap B_2|$. Moreover $m = \inf_{B_1}u $ and $M  = \sup_{B_1}u$. We first note that it is enough to show that 
\begin{equation} \label{naytettava}
\int_m^{M} \frac{dt}{\sqrt{F(2t)} +t} \leq C. 
\end{equation}
Indeed, then the claim follows by change of variables, as follows   
\[
\begin{split}
\int_m^{M} \frac{ds}{\sqrt{F(s)} +s} &\leq  2  \int_{m/2}^{M/2} \frac{dt}{\sqrt{F(2t)} +t}\\
&\leq   2   \int_{m/2}^{m} \frac{dt}{t} + 2 \int_m^{M} \frac{dt}{\sqrt{F(2t)} +t}\\
&\leq 2 \log 2 + C.
\end{split}
\]

Let $t_0>0$ be such that $\eta(t_0)= \frac{|B_2|}{2}$ or, if such number does not exist, the supremum over the  numbers $t$ for which  $\eta(t) \leq \frac{|B_2|}{2}$.  Let $\delta_0>0$ be from Lemma \ref{DT1}. Let us first show that there exists $C$, which depends on $\delta_0$, such that 
either the integral is bounded 
\[
\int_0^{t_0} \frac{ds}{\sqrt{F(2s)} +s} \leq C
\]
or there exists $t_\delta \in (0,t_0)$ such that 
\begin{equation} \label{raja alhaalta 1}
\eta(t_\delta)\leq \delta_0 \left( \frac{t_\delta}{\sqrt{F(2t_\delta)} +t_\delta} \right)^n \qquad \text{and} \qquad \int_{t_\delta}^{t_0}  \frac{ds}{\sqrt{F(2s)} +s} \leq C.
\end{equation}
Indeed, it follows from   Proposition \ref{decay oma} that for every $t \leq t_0$ for which the first inequality in \eqref{raja alhaalta 1} does not hold, i.e., 
\[
\eta(t)  \geq \delta_0 \left( \frac{t}{\sqrt{F(2t)} +t} \right)^n
\]
we have
\[
\eta'(t) \geq c\frac{ \eta^{1- \frac{1}{n}}(t) }{\sqrt{F(2t)} +t}
\]
for a constant which depends on $\delta_0$. In other words
\[
\frac{d}{dt} (\eta^{\frac{1}{n}}(t)) \geq \frac{c}{\sqrt{F(2t)} +t}.
\]
We  integrate the above inequality over $(t,t_0)$ and get
\[
|B_2| \geq \eta^{\frac{1}{n}}(t_0) - \eta^{\frac{1}{n}}(t)  \geq c \int_t^{t_0} \frac{ds}{\sqrt{F(2s)} +s}. 
\]
Therefore if
\[
\int_0^{t_0} \frac{ds}{\sqrt{F(2s)} +s} \geq \frac{2|B_2|}{c}
\]
we conclude that there exists $t_\delta \in (0,t_0)$ for which  \eqref{raja alhaalta 1} holds. Since $t_\delta$ satisfies \eqref{raja alhaalta 1} we deduce from  Lemma \ref{DT1} that 
\[
m \geq \frac{t_\delta}{2}.
\]
Therefore we have
\begin{equation}
\label{minimin raja}
\int_m^{t_0} \frac{ds}{\sqrt{F(2s)} +s} \leq  \int_{t_\delta/2}^{t_\delta} \frac{ds}{s} +  \int_{t_\delta}^{t_0} \frac{ds}{\sqrt{F(2s)} +s} \leq C
\end{equation}
for a constant $C$ which is independent of $u$ and $f$.

To prove \eqref{naytettava} we need yet to show that 
\[
\int_{t_0}^M \frac{ds}{\sqrt{F(2s)} +s} \leq C.
\]
To this aim let $\eps >0$ be a small number which we choose later. Let $t_\eps \geq t_0$ be the first value of $u$  such that 
\[
|\{ u \geq t_\eps \} \cap B_2| \leq \eps,
\] 
or, to be  more precise,
\[
t_\eps := \sup \{t >0 : \mu(t) > \eps \}.
\]
Proposition \ref{decay oma} implies that for every $t \in (t_0, t_\eps)$ it holds
\[
- \mu'(t) \geq c\frac{ \mu^{1- \frac{1}{n}}(t) }{\sqrt{F(2t)} +t}
\]
for a constant $c$ which depends on $\eps$.  In other words
\[
-\frac{d}{dt} (\mu^{\frac{1}{n}}(t)) \geq \frac{c}{\sqrt{F(2t)} +t}.
\]
Therefore we have 
\[
|B_2| \geq \mu^{\frac{1}{n}}(t_0) - \mu^{\frac{1}{n}}(t_\eps) \geq c \int_{t_0}^{t_\eps} \frac{dt}{\sqrt{F(2t)} +t}. 
\]
We will show that when $\eps>0$ is chosen small enough  it holds $M \leq 2 t_\eps$. This will imply 
\[
\int_{t_0}^M \frac{dt}{\sqrt{F(2t)} +t} \leq   \int_{t_0}^{t_\eps} \frac{dt}{\sqrt{F(2t)} +t} +   \int_{t_\eps}^{2t_\eps} \frac{dt}{t} \leq C.
\] 
The result then follows from the previous estimate and from \eqref{minimin raja}.

To prove $M  \leq 2 t_\eps$ we argue by contradiction and assume that $M > 2 t_\eps$. Recall that by the assumption on $t_\eps$  it holds $|\{ u \geq M/2 \} \cap B_2| \leq \eps$. We note that by the maximum principle the maximum of 
$u$ in any ball $\bar{B}_R$ is obtained on the boundary. Therefore  when $\eps$ is small  we conclude from   Lemma \ref{perus sub} that 
\begin{equation} \label{paljon steppeja}
  \sup_{B_{\frac{5}{4}}}  u \geq 8 M \qquad \text{and }\qquad \sup_{B_{\frac{7}{4}}} u \geq 8 \sup_{B_{\frac{3}{2}}} u. 
\end{equation}
To prove \eqref{paljon steppeja} choose $x_0 \in \partial B_1$ such that $u(x_0)= M$ and $y_0 \in \partial B_{\frac{3}{2}}$ such that $u(y_0)=  \sup_{B_{\frac{3}{2}}} u$ and 
apply Lemma \ref{perus sub}  in $B(x_0, 1/4)$ and in $B(y_0,1/4)$.

We choose radii $1=R_0 < R_1 < R_2 < \dots$ such that the corresponding maxima 
\[
M_k := \sup_{B_{R_k}}u
\]
satisfy 
\[
 M_k = 2M_{k-1}.
\]
Here we use notation $M_0$ for $M= \sup_{B_1}u$. We continue to do this until the first time we find $R_K$ such that $R_K \geq 3/2$. It follows from  \eqref{paljon steppeja}
that $K \geq 4$ and that $R_K \leq 7/4<2$. In particular, $R_K$ is well defined.  Denote $r_k = R_k- R_{k-1}$. We also deduce  from  \eqref{paljon steppeja} that $R_3 \leq 5/4$ and therefore
\begin{equation}
\label{r_1 ei ole iso}
r_1 +r_2 +r_3= R_3- R_0\leq \frac{1}{4}.
\end{equation}

Let us fix $k = 4,   \dots, K$. I claim that for   small  $c$   it holds
\begin{equation}
\label{r_k ei ole iso}
 \mu^{\frac{1}{n}}(t) \geq c r_k 
\end{equation}
for every $t \in (M_{k-4}, M_{k-2}]$. Note that it is enough to show \eqref{r_k ei ole iso} for $t = M_{k-2}$. We argue by contradiction and assume that 
$\mu^{\frac{1}{n}}(M_{k-2}) \leq c r_k$. This can be written as 
\[
\mu(M_{k-2}) \leq \eps_1| B_{r_{k}}|
\]
for  small $\eps_1 >0$. Since $M_{k-1} = 2  M_{k-2}$ this can be again written as
\[
 |\{ u \geq M_{k-1}/ 2  \} \cap B_2| \leq \eps_1| B_{r_{k}}|.
\]
Let $x_{k-1} \in \partial B_{R_{k-1}}$ be such that $u(x_{k-1}) = M_{k-1}$. When $\eps_1$ is small 
it follows from Lemma \ref{perus sub}  that 
\[
 \sup_{B(x_{k-1}, r_k)} u \geq 8 M_{k-1}.
\]
However since $M_k \geq \sup_{B(x_{k-1}, r_k)} u$  this contradicts the fact that $M_k$ was chosen such that $M_k = 2 M_{k-1}$. Hence we have \eqref{r_k ei ole iso}. 
Note also  that  it follows from 
\[
0< \sup_{B(x_{k-1}, r_k)} u \leq M_k = 2M_{k-1} = 2 u(x_{k-1})
\]
and from Lemma \ref{de giorgi subille} that 
\[
\frac{M_{k-1}}{\sqrt{F(M_{k-1}/2)}} \geq c r_k.
\]
For every $t \in (M_{k-4}, M_{k-3}]$ it holds  $M_{k-1} = 8 M_{k-4} < 8 t$ and $M_{k-1}/2 = 2M_{k-3} \geq  2t$.
Therefore by the monotonicity of $F$ we have 
\begin{equation}
\label{epa homo esti}
\frac{t}{\sqrt{F(2t)}} \geq c r_k
\end{equation}
for every $t \in (M_{k-4}, M_{k-3}]$.

Let us fix $k = 4,\dots, K$. We deduce  from Proposition  \ref{decay oma} and from \eqref{r_k ei ole iso} and \eqref{epa homo esti} that for every
 $t \in (M_{k-4}, M_{k-3}]$ we have the estimate
\[
-\mu'(t) \geq \frac{c}{t} \mu^{1- \frac{1}{n}}(t) r_k. 
\]
In other words
\[
-\frac{d}{dt} (\mu^{\frac{1}{n}}(t)) \geq  \frac{c}{t} r_k.
\]
We integrate this over $(M_{k-4}, M_{k-3})$ and use the fact that $M_{k-3} =2 M_{k-4}$ to  conclude
\[
\mu^{\frac{1}{n}}(M_{k-4})- \mu^{\frac{1}{n}}(M_{k-3}) \geq c r_k.
\]
Recall that $M_0 = M= \sup_{B_1}u$. Summing the previous  inequality over $k = 4,\dots, K$ gives 
\begin{equation}
\label{siisti estimaatti}
\mu^{\frac{1}{n}}(M) \geq \sum_{k=4}^{K}\mu^{\frac{1}{n}}(M_{k-4})- \mu^{\frac{1}{n}}(M_{k-3})  \geq c  \sum_{k=4}^{K}r_k.
\end{equation}
Recall that $r_k = R_{k}-R_{k-1}$, $R_0 =1$ and $R_K \geq 3/2$. Therefore $\sum_{k=1}^K r_k = R_K -R_0 \geq \frac{1}{2}$.
Thus we deduce from  \eqref{r_1 ei ole iso} that  
\[
 \sum_{k=4}^{K}r_k \geq \frac{1}{4}.
\]
Therefore  \eqref{siisti estimaatti} yields 
\[
\mu^{\frac{1}{n}}(M) \geq c.
\]
However, by the choice of $t_\eps$  we have 
\[
\mu(M) = |\{ u \geq M \} \cap B_2| \leq |\{ u \geq  t_\eps \} \cap B_2| \leq \eps
\]
which  is a contradiction when $\eps>0$ is small. 
\end{proof}

At the end let us  show why Theorem \ref{thm1} does not hold  for equation
\begin{equation} \label{pde 2}
-\Delta u = f(u)
\end{equation} 
when $n \geq 2$. If  Thereom \ref{thm1}  would be true for nonnegative solutions of \eqref{pde 2} then there would be $C_0$ such that 
\begin{equation} \label{RR loppu}
\int_m^M \frac{1}{\sqrt{F(t)}+ t } \leq C_0.
\end{equation}
Let $\vphi \geq 0$ be the  fundamental solution of the Laplace equation with $\inf_{B_1}\vphi = 1$ and singularity at the origin. Since $\vphi$ is  unbounded we find a radius $r>0$ such that  for the value  of $\vphi$ on $\partial B_r$, denote it by $T$, it holds
\begin{equation} \label{RR loppu2}
\int_1^{T} \frac{1}{t} \geq 2C_0.
\end{equation}
We define $u \in C^{1,1}(B_2)$ such that $u(x)= \vphi(x)$ for $x \in B_2 \setminus \bar{B}_r$ and $u(x) = a (  r^2-  |x|^2) + T$ for $x \in B_r$. Here $a>0$ is chosen such that $u \in C^{1,1}(B_2)$. Then $u$
is a solution of \eqref{pde 2} for some $f$ which satisfies $f(t) = 0$ for $t \in (0,T)$. Therefore \eqref{RR loppu} can not hold because of \eqref{RR loppu2}.

\section*{Acknowledgment}
 This  work was supported by the  Academy of Finland grant 268393.

\end{document}